\numberwithin{equation}{section}
\numberwithin{figure}{section}
  \theoremstyle{plain}
\newtheorem*{thm*}{Theorem}
\theoremstyle{plain}
\newtheorem{thm}{Theorem}[section]
 \theoremstyle{definition}
\newtheorem{example}[thm]{Example}
\theoremstyle{plain}
\newtheorem{prop}[thm]{Proposition}
\theoremstyle{remark}
\newtheorem{rem}[thm]{Remark}
\theoremstyle{definition}
\newtheorem{defn}[thm]{Definition}
\theoremstyle{plain}
\newtheorem{lem}[thm]{Lemma}
\theoremstyle{plain}
\newtheorem{cor}[thm]{Corollary}
\theoremstyle{definition}
\newtheorem{parn}{}[section]
\begin{document}
\title{On exotic affine $3$-spheres}

\author{Adrien Dubouloz} 
\address{Adrien Dubouloz, Institut de Math\'ematiques de Bourgogne, Universit\'e de Bourgogne, 9 avenue Alain Savary - BP 47870, 21078 Dijon cedex, France} 
\email{Adrien.Dubouloz@u-bourgogne.fr}

\author{David R. Finston} 
\address{Department of Mathematical Sciences, New Mexico
State University, Las Cruces, New Mexico 88003} 
\email{dfinston@nmsu.edu}

\begin{abstract}
Every $\mathbb{A}^{1}-$bundle over $\mathbb{A}_{\ast }^{2},$ the complex
affine plane punctured at the origin, is trivial in the differentiable category
but there are infinitely many distinct isomorphy classes of algebraic
bundles. Isomorphy types of total spaces of such algebraic bundles are
considered; in particular, the complex affine $3$-sphere $\mathbb{S}_{%
\mathbb{C}}^{3},$ given by $z_{1}^{2}+z_{2}^{2}+z_{3}^{2}+z_{4}^{2}=1,$
admits such a structure with an additional homogeneity property. Total
spaces of nontrivial homogeneous $\mathbb{A}^{1}$-bundles over $\mathbb{A}%
_{\ast }^{2}$ are classified up to $\mathbb{G}_{m}$-equivariant algebraic
isomorphism  and a criterion for nonisomorphy is given. 
In fact $\mathbb{S}_{\mathbb{C}}^{3}$ is not isomorphic as an
abstract variety to the total space of any $\mathbb{A}^{1}$-bundle over 
$\mathbb{A}_{\ast }^{2}$ of different homogeneous degree, which gives rise to the existence
of exotic spheres, a phenomenon that first arises in dimension three. 
As a by product, an example is given of two biholomorphic but not algebraically 
isomorphic threefolds, both with a trivial Makar-Limanov invariant, and with isomorphic
cylinders.
\end{abstract}

 \subjclass[2010]{14R05, 14R20, 14R25} 
 \keywords{exotic affine spheres; homogeneous principal bundles}

\maketitle

\section*{Introduction}

Exotic affine spaces emerged in the 1990's as rather unusual objects in
affine algebraic geometry. These are smooth complex affine varieties
diffeomorphic to a euclidean space but not algebraically isomorphic to the
usual affine space. Actually, the first examples were constructed by
Ramanujam in a landmark paper \cite{Ramanujam1971} in which he also
established the non existence of exotic affine planes. Since then, many
other examples of smooth contractible affine varieties of any dimension $%
n\geq 3$ have been discovered and these objects have progressively become
ubiquitous in affine algebraic geometry. The study of these potential exotic 
$\mathbb{A}^{n}$'s has been a motivation for the introduction and the
development of new techniques and "designer" invariants which in turn led to
important progress in related questions, such as the Zariski Cancellation
Problem (see e.g. \cite{Zaidenberg1999} for a survey). So far, these
invariants have succeeded in distinguishing certain of these varieties from
usual affine spaces, most notably the famous Russell cubic threefold $%
X=\left\{ x^{2}y+z^{2}+x+t^{3}=0\right\} \subset \mathbb{A}^{4}$ \cite%
{Kaliman1999,Makar-Limanov1996}. But the main difficulty still remains the
lack of effective tools to recognize exotic spaces or, equivalently, the
lack of effective characterizations of affine spaces among affine varieties.

More generally, given any smooth complex affine variety $V$, one can ask if
there exists smooth affine varieties $W$ non isomorphic to $V$ but which are
biholomorphic or diffeomorphic to $V$ when equipped with their underlying
structures of complex analytic or differentiable manifolds. When such exist,
these varieties $W$ could be called exotic algebraic structures on $V,$ but
it makes more sense to reserve this terminology for the case where the chosen
variety $V$ carries an algebraic structure that we consider as the
\textquotedblleft usual\textquotedblright\ one.

In addition to affine spaces, a very natural class for which we have such
\textquotedblleft usual\textquotedblright\ algebraic structures consists of
non-degenerate smooth complex affine quadrics, i.e., varieties isomorphic to
one of the form $\mathbb{S}_{\mathbb{C}}^{n}=\left\{ x_{1}^{2}+\cdots
+x_{n+1}^{2}=1\right\} $ equipped with its unique structure of a closed
algebraic subvariety of $\mathbb{A}_{\mathbb{C}}^{n+1}$. So an\emph{\ exotic
complex affine $n$-sphere, }if it exists, will be a smooth complex affine
variety diffeomorphic to $\mathbb{S}_{\mathbb{C}}^{n}$ but not algebraically
isomorphic to it. Since $\mathbb{S}_{\mathbb{C}}^{1}\simeq \mathbb{A}%
^{1}\setminus \left\{ 0\right\} $ is the unique smooth affine curve $C$ with 
$H_{1}\left( C,\mathbb{Z}\right) \simeq \mathbb{Z}$, there is no exotic
affine $1$-sphere. Similarly, there is no exotic affine $2$-sphere and the
same phenomenon as for affine spaces occurs: the algebraic structure on a
smooth affine surface diffeomorphic to $\mathbb{S}_{\mathbb{C}}^{2}$ is
actually uniquely determined by its topology: namely, a smooth affine
surface $S$ is algebraically isomorphic to $\mathbb{S}_{\mathbb{C}}^{2}$ if
and only if it has the same homology type and the same homotopy type at
infinity as $\mathbb{S}_{\mathbb{C}}^{2}$ (see \ref{thm:NoExotic2-sphere} in
the appendix below).

In the context of the cancellation problem for factorial threefolds, S.
Maubach and the second author \cite{Finston2008} studied a family of smooth
affine threefolds with the homology type of $\mathbb{S}_{\mathbb{C}}^{3}$ :
starting from a Brieskorn surface $S_{p,q,r}=\left\{
x^{p}+y^{q}+z^{r}=0\right\} \subset \mathbb{A}^{3}$, they consider smooth
affine threefolds $Z_{m,n}\subset S_{p,q,r}\times \mathbb{A}^{2}$ defined by
equations of the form $x^{m}v-y^{n}u=1$, $m\geq n\geq 1$. These varieties
come equipped via the first projection with the structure of a locally
trivial $\mathbb{A}^{1}$-bundle $\rho :Z_{m,n}\rightarrow S_{p,q,r}^{\ast }$
over the smooth locus $S_{p,q,r}^{\ast }=S_{p,q,r}\setminus \{(0,0,0)\}$ of $%
S_{p,q,r}$ . For a fixed triple $\left( p,q,r\right) $, they are all
diffeomorphic to each other and have the Brieskorn sphere $\Sigma \left(
p,q,r\right) $ as a strong deformation retract. The main result of \cite%
{Finston2008} asserts in contrast that if $\frac{1}{p}+\frac{1}{q}+\frac{1}{r%
}<1$, then the isomorphy type of the total space of an $\mathbb{A}^{1}$%
-bundle over $S_{p,q,r}^{\ast }$ as an abstract algebraic variety is
uniquely determined by its isomorphy class as an $\mathbb{A}^{1}$-bundle
over $S_{p,q,r}^{\ast }$, up to composition by automorphisms of $%
S_{p,q,r}^{\ast }$\footnote{%
In loc. cit., this property is established by algebraic methods involving
the computation of the Makar-Limanov invariant of the varieties $Z_{m,n}$,
but this can also be seen alternatively as consequence of the fact for $%
1/p+1/q+1/r<1$, the logarithmic Kodaira dimension $\overline{\kappa }%
(S_{p,q,r}^{*})$ of $S_{p,q,r}^{*}$ is positive.}. This enables in
particular the conclusion that the $Z_{m,n}$ are pairwise non isomorphic
algebraic varieties, despite the isomorphy of all of the cylinders $
Z_{m,n}\times \mathbb{A}^{1}$.

Noting that $\mathbb{S}_{\mathbb{C}}^{3}\cong \mathrm{SL}_{2}\left( \mathbb{C%
}\right) =\left\{ xv-yu=1\right\} \subset \mathbb{A}_{\ast }^{2}\times 
\mathbb{A}^{2}$, where $\mathbb{A}_{\ast }^{2}=\mathrm{Spec}\left( \mathbb{C}%
\left[ x,y\right] \right) \setminus \{(0,0)\}$, the previous result strongly
suggests that the varieties 
\begin{equation*}
X_{m,n}=\left\{ x^{m}v-y^{n}u=1\right\} \subset \mathbb{A}_{\ast }^{2}\times 
\mathbb{A}^{2},\quad m+n>2,
\end{equation*}%
could be exotic affine $3$-spheres. Indeed, for every $m,n\geq 1$, the first
projection again induces a Zariski locally trivial $\mathbb{A}^{1}$-bundle $%
\rho :X_{m,n}\rightarrow \mathbb{A}_{\ast }^{2}$. The latter being trivial
in the euclidean topology, the $X_{m,n}$ are thus all diffeomorphic to $%
\mathbb{A}_{\ast }^{2}\times \mathbb{R}^{2}$ and have the real sphere $S^{3}$
as a strong deformation retract. This holds more generally for any Zariski
locally trivial $\mathbb{A}^{1}$-bundle $\rho :X\rightarrow \mathbb{A}_{\ast
}^{2}$, and so all smooth affine threefolds admitting such a structure are
natural candidates for being exotic $3$-spheres. But it turns out that it is
a challenging problem to distinguish these varieties from $\mathbb{S}_{%
\mathbb{C}}^{3}\simeq X_{1,1}$ since, in contrast with the situation
considered in \cite{Finston2008}, the isomorphy type of the total space of
an $\mathbb{A}^{1}$-bundle $\rho :X\rightarrow \mathbb{A}_{\ast }^{2}$ as an
abstract variety is no longer uniquely determined by its structure as an $%
\mathbb{A}^{1}$-bundle; for instance, a consequence of our main result is
the following rather unexpected fact that for every pair $\left( m,n\right)
,\left( p,q\right) \in \mathbb{Z}_{>0}^{2}$ such that $m+n=p+q\geq 4$, the
threefolds $X_{m,n}$ and $X_{p,q}$ are isomorphic as abstract algebraic
varieties while they are isomorphic as $\mathbb{A}^{1}$-bundles over $%
\mathbb{A}_{\ast }^{2}$ only if $\left\{ m,n\right\} =\left\{ p,q\right\} $.%
\footnote{%
This fact was actually already observed by the authors and P.D Metha in the
unpublished paper  \cite{DubFinMet2009}.}

While a complete and effective classification of isomorphy types of total
spaces of $\mathbb{A}^{1}$- bundles over $\mathbb{A}_{\ast }^{2}$ seems out
of reach for the moment, we obtain a satisfactory answer for a particular
class of bundles containing the varieties $X_{m,n}$ that we call \emph{%
homogeneous} $\mathbb{G}_{a}$-\emph{bundles}. These are principal $\mathbb{G}%
_{a}$-bundles $\rho :X\rightarrow \mathbb{A}_{\ast }^{2}$ equipped with a
lift of the $\mathbb{G}_{m}$-action $\lambda \cdot \left( x,y\right) =\left(
\lambda x,\lambda y\right) $ on $\mathbb{A}_{\ast }^{2}$ which is
\textquotedblleft locally linear\textquotedblright\ on the fibers of $\rho $%
. This holds for instance on the $X_{m,n}$'s for the lifts $\lambda \cdot
\left( x,y,u,v\right) =\left( \lambda x,\lambda y,\lambda ^{-n}u,\lambda
^{-m}v\right) $, which are the analogues of the action of the maximal torus
of $X_{1,1}=\mathrm{SL}_{2}\left( \mathbb{C}\right) $ on $\mathrm{SL}%
_{2}\left( \mathbb{C}\right) $ by multiplication on the right. For such
bundles, there is natural notion of homogeneous degree for which, in
particular, the bundle $X_{m,n}\rightarrow \mathbb{A}_{\ast }^{2}$ equipped
with the previous lift has homogeneous degree $-m-n$. Our main
classification result then reads as follows (see Theorem \ref%
{thm:HomBunIsoType}):

\begin{thm*}
The total spaces of two nontrivial homogeneous $\mathbb{G}_{a}$-bundles are $%
\mathbb{G}_{m}$-equivariantly isomorphic if and only if they have the same
homogeneous degree. In particular, for a fixed $d\geq2,$ the total spaces of
nontrivial homogeneous $\mathbb{G}_{a}$-bundles $\rho:X\rightarrow\mathbb{A}%
_{*}^{2}$ of degree $-d$ are all isomorphic as abstract affine varieties.
\end{thm*}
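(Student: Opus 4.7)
The plan is as follows. I would first encode every homogeneous $\mathbb{G}_{a}$-bundle $\rho\colon X\to\mathbb{A}_{\ast}^{2}$ as a \v{C}ech cocycle on the standard cover $\mathcal{U}=\{U_{x},U_{y}\}$ by the complements of the two coordinate axes. Since $U_{x}$ and $U_{y}$ are affine, any principal $\mathbb{G}_{a}$-bundle trivializes over $\mathcal{U}$, so $X$ is determined by a class in $\check{H}^{1}(\mathcal{U},\mathcal{O}_{\mathbb{A}_{\ast}^{2}})\cong\bigoplus_{i,j\geq 1}\mathbb{C}\cdot x^{-i}y^{-j}$. The homogeneity condition for a bundle of degree $-d$ singles out the $(d-1)$-dimensional subspace $V_{d}$ spanned by $x^{-i}y^{-(d-i)}$ with $1\leq i\leq d-1$; the concrete model for the cocycle $x^{-m}y^{-n}$ with $m+n=d$ is $X_{m,n}$. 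For $d=2$ one has $\dim V_{2}=1$ and the statement is trivial, so I assume $d\geq 3$.

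The main content is the ``if'' direction, which I expect to be the principal technical obstacle. Given two nonzero classes $[g_{1}],[g_{2}]\in V_{d}$, I must construct a $\mathbb{G}_{m}$-equivariant isomorphism $X_{g_{1}}\to X_{g_{2}}$. A first reduction uses the natural $\mathrm{GL}_{2}(\mathbb{C})$-action on $\mathbb{A}_{\ast}^{2}$ by $\mathbb{G}_{m}$-equivariant automorphisms, which induces a linear action on $V_{d}$. However, its orbits on $\mathbb{P}(V_{d})$ correspond to unordered configurations of $d-2$ points on $\mathbb{P}^{1}$ and fail to be transitive for $d\geq 4$, so base-preserving automorphisms alone do not suffice. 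The essential additional ingredient is to admit $\mathbb{G}_{m}$-equivariant isomorphisms of total spaces that do \emph{not} cover any automorphism of the base $\mathbb{A}_{\ast}^{2}$; these are produced through elementary affine modifications along $\mathbb{G}_{m}$-invariant principal divisors in the fibers of $\rho$, in the spirit of Danielewski-type equivalences. Concretely, one contracts a $\mathbb{G}_{m}$-invariant divisor inside a fiber of $X_{g_{1}}$ and simultaneously blows up a complementary one, yielding a new homogeneous $\mathbb{G}_{a}$-bundle whose cocycle is related to $g_{1}$ by an explicit ``shift'' of indices, for instance replacing $(m,n)$ by $(m+1,n-1)$. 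The main task is to carry out these moves biregularly in $\mathbb{G}_{m}$-equivariant fashion and verify that their iteration reaches every nonzero class in $V_{d}$.

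For the ``only if'' direction, the degree $d$ must be recovered as a genuine invariant of $X$ as a $\mathbb{G}_{m}$-variety. The vertical $\mathbb{G}_{a}$-action is generated by a locally nilpotent derivation $\partial$ on $\mathcal{O}(X)$ that is homogeneous of weight $d$ and has kernel $\rho^{*}\mathbb{C}[x,y]$. Any modification $f\partial$ with $f\in\mathbb{C}[x,y]$ is again locally nilpotent, but produces a free $\mathbb{G}_{a}$-action only when $f$ is a nonzero constant, since otherwise the action degenerates on the zero locus $\{f=0\}$. The plan is to establish that, up to scalar, $\partial$ is the unique homogeneous LND on $\mathcal{O}(X)$ generating a free $\mathbb{G}_{a}$-action with quotient $\mathbb{A}_{\ast}^{2}$, so that its weight $d$ depends only on the $\mathbb{G}_{m}$-equivariant isomorphism class of $X$. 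The ``in particular'' conclusion on abstract isomorphism follows at once, since any $\mathbb{G}_{m}$-equivariant isomorphism is \emph{a fortiori} an algebraic isomorphism of affine varieties.
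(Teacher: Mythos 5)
Both halves of your plan have genuine gaps, and in each case the deferred step is essentially the whole theorem. For the ``if'' direction you correctly observe that $\mathrm{GL}_{2}(\mathbb{C})$ is not transitive on $\mathbb{P}(W_{-d})$ for $d\geq4$, but the substitute you propose --- Danielewski-type elementary modifications along invariant divisors, asserted to be biregular and to shift $(m,n)$ to $(m+1,n-1)$ --- is exactly the point that cannot be taken for granted: affine modifications of this kind in general \emph{change} the isomorphism type of the total space (this is precisely how the surfaces $\left\{ x^{k}z=y^{2}-1\right\}$ furnish counterexamples to cancellation), so ``carry out these moves biregularly'' is the theorem, not a step towards it. Moreover, even granting the monomial shifts, $\mathrm{GL}_{2}(\mathbb{C})$ combined with moves between monomial cocycles cannot reach a class $x^{-m}y^{-n}p(x,y)$ whose associated binary form has three or more distinct roots (the number of distinct roots is a $\mathrm{GL}_{2}$-invariant and monomials have at most two), so for $d\geq5$ the proposed iteration does not exhaust $W_{-d}\setminus\{0\}$. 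The paper instead passes to the quotient surface $Y=X/\mathbb{G}_{m}$, a nontrivial principal $\mathcal{O}_{\mathbb{P}^{1}}(-d)$-bundle over $\mathbb{P}^{1}$, i.e.\ the complement of an ample section of self-intersection $d$ in a Hirzebruch surface, and invokes a refined Danilov--Gizatullin theorem (Theorem \ref{thm:DanGizIso}) producing an isomorphism $f:X^{\prime}/\mathbb{G}_{m}\overset{\sim}{\rightarrow}X/\mathbb{G}_{m}$ that moreover pulls back the Picard generator $\overline{\rho}^{\ast}\mathcal{O}_{\mathbb{P}^{1}}(-1)$ correctly; this last compatibility, absent from your outline, is what allows the surface isomorphism to be lifted to a $\mathbb{G}_{m}$-equivariant isomorphism of the $\mathbb{G}_{m}$-bundles $X\rightarrow X/\mathbb{G}_{m}$.

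For the ``only if'' direction, the uniqueness claim on which your argument rests is false: already on $X_{1,1}=\mathrm{SL}_{2}(\mathbb{C})$ the two derivations $x\partial_{u}+y\partial_{v}$ and $u\partial_{x}+v\partial_{y}$ are homogeneous, non-proportional, and both generate free $\mathbb{G}_{a}$-actions with quotient $\mathbb{A}_{\ast}^{2}$ (they have weights $2$ and $-2$ for the standard torus action), so ``the'' homogeneous LND with these properties is not well defined, and showing that every such LND has weight $\pm d$ would amount to classifying all free homogeneous $\mathbb{G}_{a}$-actions on $X$ --- not easier than the statement being proved. The paper extracts $d$ instead from the quotient surface: $\mathrm{Pic}(Y)\simeq\mathbb{Z}$ is generated by $\nu^{\ast}\mathcal{O}_{\mathbb{P}^{1}}(-1)$ and $\omega_{Y}\simeq\nu^{\ast}\mathcal{O}_{\mathbb{P}^{1}}(d-2)$, so $d$ is read off from the divisibility of the canonical class in the Picard group, which any isomorphism of the quotients must preserve.
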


This implies in particular that a variety $X_{m,n}$ with $m+n\geq 3$
equipped with the action above is not $\mathbb{G}_{m}$-equivariantly
isomorphic to $X_{1,1}$. We finally derive from a careful study of the
effect of algebraic isomorphisms on the algebraic de Rham cohomology of
total spaces of $\mathbb{A}^{1}$-bundles over $\mathbb{A}_{\ast }^{2}$ that
every variety $X_{m,n}$ with $m+n\geq 3$ is indeed an exotic affine $3$%
-sphere. The criterion we give in Theorem \ref{thm:DeRhamArg} also provides
an effective tool to construct families of pairwise non isomorphic exotic $3$%
-spheres : for instance, we show that the varieties $X_{2,2}=\left\{
x^{2}v-y^{2}u=1\right\} $ and $\tilde{X}_{2,2}=\left\{
x^{2}v-y^{2}u=1+xy\right\} $ are non isomorphic exotic affine $3$-spheres
yet they are even biholomorphic as complex analytic manifolds. \newline

The article is organized as follows. In the first section, the basic
properties of $\mathbb{A}^{1}$-bundles over the punctured plane $\mathbb{A}%
_{\ast }^{2}$ are reviewed and the notion of homogeneous $\mathbb{G}_{a}$%
-bundle is developed. The second section is devoted to the proofs of the
various isomorphy criteria presented above. The third section takes the form
of an appendix, in which we give a short proof of the non existence of
exotic affine $2$-spheres and establish a refined version of the so-called
Danilov-Gizatullin Isomorphy Theorem \cite{Gizatullin1977} which is used in
the proof of the main Theorem \ref{thm:HomBunIsoType}

\section{Basic facts on $\mathbb{A}^{1}$-bundles and $\mathbb{G}_{a}$%
-bundles over $\mathbb{A}_{*}^{2}$}

\subsection{Recollection on algebraic $\mathbb{A}^{1}$-bundles and $\mathbb{G%
}_{a}$-bundles}

\indent\newline
\noindent An $\mathbb{A}^{1}$-bundle over a scheme $S$ is a morphism $\rho
:X\rightarrow S$ for which every point of $S$ has a Zariski open
neighborhood $U\subset S$ with a local trivialization such that $\rho
^{-1}\left( U\right) \simeq U\times \mathbb{A}^{1}$ as schemes over $U$.
Transition isomorphisms over the intersections $U_{i}\cap U_{j}$ of pairs of
such open sets are given by affine transformations of the fiber isomorphy classes 
of such bundles are thus in one-to-one
correspondence with isomorphy classes of Zariski locally trivial principal
bundles under the affine group $\mathrm{Aut}\left( \mathbb{A}^{1}\right)
\simeq \mathbb{G}_{m}\ltimes \mathbb{G}_{a}$. Additional properties of these
bundles can be read from the exact sequence of non-abelian cohomology

\begin{equation*}
0\rightarrow H^{0}\left( S,\mathbb{G}_{a}\right) \rightarrow H^{0}\left( S,%
\mathbb{G}_{m}\ltimes \mathbb{G}_{a}\right) \rightarrow H^{0}\left( S,%
\mathbb{G}_{m}\right) \rightarrow H^{1}\left( S,\mathbb{G}_{a}\right)
\rightarrow H^{1}\left( S,\mathbb{G}_{m}\ltimes \mathbb{G}_{a}\right)
\rightarrow H^{1}\left( S,\mathbb{G}_{m}\right)
\end{equation*}%
deduced from the short exact sequence of groups $0\rightarrow \mathbb{G}%
_{a}\rightarrow \mathbb{G}_{m}\ltimes \mathbb{G}_{a}\rightarrow \mathbb{G}%
_{m}\rightarrow 0$ (see e.g. \cite[3.3.1]{Giraud1971}). For instance, if $S$
is affine then $H^{1}(S,\mathbb{G}_{a})\simeq H^{1}(S,\mathcal{O}%
_{S})=\left\{ 0\right\} $ and so every $\mathbb{A}^{1}$-bundle over $S$
actually carries the structure of a line bundle. Similarly, if the Picard
group $\mathrm{Pic}\left( S\right) \simeq H^{1}(S,\mathbb{G}_{m})$ of $S$ is
trivial then every $\mathbb{A}^{1}$-bundle over $S$ can be equipped with the
additional structure of a principal $\mathbb{G}_{a}$-bundle. Furthermore, in
this case, the set $H^{1}(S,\mathbb{G}_{m}\ltimes \mathbb{G}_{a})$ of
isomorphy classes of $\mathbb{A}^{1}$-bundles over $S$ is isomorphic to the
quotient of $H^{1}(S,\mathbb{G}_{a})$ by the action of $H^{0}(S,\mathbb{G}%
_{m})=\Gamma (S,\mathcal{O}_{S}^{\ast })$ via a multiplicative
reparametrization: $a\in \Gamma (S,\mathcal{O}_{S}^{\ast })$ sends the
isomorphy class of the $\mathbb{G}_{a}$-bundle $\rho :X\rightarrow S$ with
action $\mathbb{G}_{a,S}\times _{S}X\rightarrow X$, $\left( t,x\right)
\mapsto t\cdot x$ to the isomorphy class of $\rho :X\rightarrow S$ equipped
with the action $\left( t,x\right) \mapsto \left( at\right) \cdot x$. 

\begin{parn}
It follows in particular that over the punctured affine plane $\mathbb{A}%
_{\ast }^{2}$, which has a trivial Picard group, the notions of $\mathbb{A}%
^{1}$-bundle and $\mathbb{G}_{a}$-bundle essentially coincide and that
isomorphy classes of nontrivial $\mathbb{A}^{1}$-bundles are in one-to-one
correspondence with elements of the infinite dimensional projective space
$\mathbb{P}H^{1}(\mathbb{A}_{\ast }^{2},\mathcal{O}_{\mathbb{A}_{\ast }^{2}})=H^{1}(\mathbb{A}_{\ast }^{2},\mathcal{O}_{\mathbb{A}_{\ast }^{2}})/\mathbb{G}_m$.
\end{parn}

\begin{parn}
Every cohomology class in $H^{1}(\mathbb{A}_{*}^{2},\mathcal{O}_{\mathbb{A}%
_{*}^{2}})$ can be represented by a \v{C}ech $1$-cocycle with value in $%
\mathcal{O}_{\mathbb{A}_{*}^{2}}$ on the the acyclic covering $\mathcal{U}%
_{0}$ of $\mathbb{A}_{*}^{2}=\mathrm{Spec}\left(\mathbb{C}\left[x,y\right]%
\right)\setminus\left\{ 0,0\right\} $ by the principal affine open subsets $%
U_{x}=\mathrm{Spec}\left(\mathbb{C}\left[x^{\pm1},y\right]\right)$ and $%
U_{y}=\mathrm{Spec}\left(\mathbb{C}\left[x,y^{\pm1}\right]\right)$,
providing an isomorphism of $\mathbb{C}$-vector spaces 
\begin{equation*}
H^{1}(\mathbb{A}_{*}^{2},\mathcal{O}_{\mathbb{A}_{*}^{2}})\simeq\check{H}%
^{1}(\mathcal{U}_{0},\mathcal{O}_{\mathbb{A}_{*}^{2}})\simeq\mathbb{C}\left[%
x^{\pm1},y^{\pm1}\right]/\langle\mathbb{C}\left[x^{\pm1},y\right]+\mathbb{C}%
\left[x,y^{\pm1}\right]\rangle.
\end{equation*}
It follows in particular from this description that $H^{1}(\mathbb{A}%
_{*}^{2},\mathcal{O}_{\mathbb{A}_{*}^{2}})$ is nonzero, which implies in
turn that there exists nontrivial algebraic $\mathbb{G}_{a}$-bundles over $%
\mathbb{A}_{*}^{2}$. In contrast, in the differentiable category, all these
bundles are globally trivial smooth fibrations with fibers $\mathbb{R}^{2}$
over $\mathbb{A}_{*}^{2}\simeq\mathbb{R}^{4}\setminus\left\{ 0\right\} $
equipped with its euclidean structure of differentiable manifold. Indeed,
since the sheaf $\mathcal{F}=\mathcal{C}^{\infty}(\mathbb{A}_{*}^{2},\mathbb{%
C})$ of complex valued $\mathcal{C}^{\infty}$-functions on $\mathbb{A}%
_{*}^{2}$ is soft (see e.g. \cite[Theorem 5, p.25]{Grauert1979}), every
algebraic \v{C}ech $1$-cocycle $g\in C^{1}(\mathcal{U}_{0},\mathcal{O}_{%
\mathbb{A}_{*}^{2}})$ representing a nontrivial class in $H^{1}(\mathbb{A}%
_{*}^{2},\mathcal{O}_{\mathbb{A}_{*}^{2}})$ is a coboundary when considered
as a $1$-cocycle in with values in $\mathcal{F}$. This implies in particular
that every algebraic $\mathbb{G}_{a}$-bundle $\rho:X\rightarrow\mathbb{A}%
_{*}^{2}$ admits the real sphere $S^3$ as a $%
\mathcal{C}^{\infty}$-strong deformation retract.
\end{parn}

\begin{example}
A well known example of nontrivial algebraic $\mathbb{G}_{a}$-bundle over $%
\mathbb{A}_{\ast }^{2}$ is given by the morphism 
\begin{equation*}
\rho :\mathrm{SL}_{2}\left( \mathbb{C}\right) =\left\{ \left( 
\begin{array}{cc}
x & u \\ 
y & v%
\end{array}%
\right) \in \mathcal{M}_{2}\left( \mathbb{C}\right) ,\,xv-yu=1\right\}
\rightarrow \mathbb{A}_{\ast }^{2},\left( 
\begin{array}{cc}
x & u \\ 
y & v%
\end{array}%
\right) \mapsto \left( x,y\right)
\end{equation*}%
which identifies $\mathbb{A}_{\ast }^{2}$ with the quotient of $\mathrm{SL}%
_{2}\left( \mathbb{C}\right) $ by the right action of its subgroup $T\simeq 
\mathbb{G}_{a}$ of upper triangular matrices with $1$'s on the diagonal. The
local trivializations of $\rho $ given by the $\mathbb{G}_{a}$-equivariant
isomorphisms $\mathrm{SL}_{2}\left( \mathbb{C}\right) \mid _{U_{x}}\simeq
U_{x}\times \mathrm{Spec}\left( \mathbb{C}\left[ x^{-1}u\right] \right) $
and $\mathrm{SL}_{2}\left( \mathbb{C}\right) \mid _{U_{y}}\simeq U_{y}\times 
\mathrm{Spec}\left( \mathbb{C}\left[ y^{-1}v\right] \right) $ differ over $%
U_{x}\cap U_{y}$ by the nontrivial \v{C}ech $1$-cocycle $(xy)^{-1}\in C^{1}(%
\mathcal{U}_{0},\mathcal{O}_{\mathbb{A}_{\ast }^{2}})=\mathbb{C}\left[
x^{\pm 1},y^{\pm 1}\right] .$ In contrast, the identity $(xy)^{-1}=\delta
(x,y){}^{-1}(\overline{x}y^{-1}+x^{-1}\overline{y})$, where $\delta \left(
x,y\right) =\left\vert x\right\vert ^{2}+\left\vert y\right\vert ^{2}\in 
\mathcal{C}^{\infty }(\mathbb{A}_{\ast }^{2},\mathbb{R}_{+}^{\ast })$, shows
that $(xy)^{-1}\in C^{1}(\mathcal{U}_{0},\mathcal{C}^{\infty }(\mathbb{A}%
_{\ast }^{2},\mathbb{C}))$ is a coboundary.
\end{example}

\subsection{Algebraic $\mathbb{G}_{a}$-bundles with affine total spaces}

\indent\newline
\noindent Since $\mathbb{A}_{\ast }^{2}$ is strictly quasi-affine, the total
space of a $\mathbb{G}_{a}$-bundle over it need not be an affine variety in
general. However, we have the following handsome characterization of $%
\mathbb{G}_{a}$-bundles with affine total spaces.

\begin{prop}
\label{pro:AffineCrit} A $\mathbb{G}_{a}$-bundle $\rho:X\rightarrow\mathbb{A}%
_{*}^{2}$ has affine total space $X$ if and only if it is nontrivial.
\end{prop}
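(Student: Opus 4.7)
The easy implication is immediate: when $\rho$ is trivial, $X\simeq\mathbb{A}_{*}^{2}\times\mathbb{A}^{1}$ contains $\mathbb{A}_{*}^{2}\times\{0\}\simeq\mathbb{A}_{*}^{2}$ as a closed subvariety, so since $\mathbb{A}_{*}^{2}$ is not affine and closed subvarieties of affine varieties are affine, $X$ cannot be affine.

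For the converse I would give a direct construction realizing $X$ as a closed subvariety of an affine space, using the \v{C}ech description from the previous paragraph. The class of a nontrivial bundle admits a unique representative $g=\sum_{i,j\geq 1}a_{ij}x^{-i}y^{-j}$ in purely-negative form, and $g\neq 0$ by hypothesis. Choose positive integers $M,N$ such that $h(x,y):=x^{M}y^{N}g$ lies in $\mathbb{C}[x,y]$; then $x^{M}g\in\mathbb{C}[x,y^{\pm 1}]$ and $y^{N}g\in\mathbb{C}[x^{\pm 1},y]$, so the expressions $U:=x^{M}u$ and $V:=y^{N}v$ (with $u,v$ the fiber coordinates on $U_{x},U_{y}$ glued by $v=u+g$) are globally regular on $X$ and satisfy the identity $x^{M}V-y^{N}U=h(x,y)$. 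The morphism $\Phi:X\to\mathbb{A}^{4}$, $P\mapsto(x,y,U,V)(P)$, is injective and identifies $X$ with the open subscheme $W\cap\{(x,y)\neq(0,0)\}$ of the closed subvariety $W:=V(x^{M}V-y^{N}U-h)\subset\mathbb{A}^{4}$. In the generic case where the corner coefficient $a_{MN}=h(0,0)$ is nonzero---as for $X=\mathrm{SL}_{2}(\mathbb{C})$, with $g=(xy)^{-1}$ and $h=1$---the locus $W$ automatically avoids $\{x=y=0\}$, so $\Phi$ is a closed embedding and $X$ is affine.

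The main obstacle is the degenerate case $a_{MN}=0$, occurring for instance with $g=x^{-1}y^{-2}+x^{-3}y^{-1}$, in which $W$ acquires a spurious copy of $\mathbb{A}^{2}$ over the origin and $\Phi$ becomes merely an open embedding. To dispose of these phantom points, I would enlarge the target by adjoining further global functions $\phi_{a}$ linear in the fiber coordinate, for $a$ running through a finite generating set of the ideal $A_{g}:=\{a\in\mathbb{C}[x,y]:ag\in\mathbb{C}[x^{\pm 1},y]+\mathbb{C}[x,y^{\pm 1}]\}$; concretely, writing $ag=\alpha_{a}+\beta_{a}$ with $\alpha_{a}\in\mathbb{C}[x^{\pm 1},y]$ and $\beta_{a}\in\mathbb{C}[x,y^{\pm 1}]$, the expression $\phi_{a}=au+\alpha_{a}=av-\beta_{a}$ is globally regular on $X$. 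Since $A_{g}$ contains $x^{M}$ and $y^{N}$, its vanishing locus in $\mathbb{A}^{2}$ is the single point $(0,0)$, so enough such functions are available to separate every point of $X$ from every phantom fiber over the origin, realizing $X$ as a closed subvariety of some $\mathbb{A}^{2+k}$ and hence as affine.
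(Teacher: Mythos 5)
Your reduction of the problem is correct as far as it goes, and it is a genuinely different route from the paper's (which base-changes to $\mathrm{Spec}(\mathcal{O}_{\mathbb{A}^{2},o})$ and invokes Miyanishi's local description of $\mathbb{G}_{a}$-bundles over a punctured two-dimensional local spectrum): the easy direction, the purely-negative normal form of the cocycle, the global regularity of $U=x^{M}u$ and $V=y^{N}v$, the identification of $X$ with the open subset $W\setminus\{x=y=0\}$ of the affine variety $W=\{x^{M}V-y^{N}U=h\}$, the case $h(0,0)\neq 0$, the global regularity of the $\phi_{a}$ for $a\in A_{g}$, and the fact that $V(A_{g})=\{(0,0)\}$ (since $x^{M},y^{N}\in A_{g}$ while $1\notin A_{g}$ by nontriviality) are all fine.

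The gap is in your final sentence. Separating the points of $X$ from the phantom fibre over the origin is not what affineness requires and does not yield a closed embedding: the inclusion $\mathbb{A}^{2}_{*}\hookrightarrow\mathbb{A}^{2}$ already separates every point from the origin, yet its image is not closed and $\mathbb{A}^{2}_{*}$ is not affine. What must be shown is that the closure of $\Phi'(X)$ in $\mathbb{A}^{2+k}$ contains no point over $(0,0)$; equivalently, by the standard criterion that an open subset $\mathrm{Spec}(B)\setminus V(J)$ of an affine scheme is affine if and only if $J$ generates the unit ideal in its ring of global functions, that $1\in(x,y)\,\Gamma(X,\mathcal{O}_{X})$. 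Your functions are the right tool for this: for $a,a'\in A_{g}$ one has $a'\phi_{a}-a\phi_{a'}=a'\alpha_{a}-a\alpha_{a'}=a\beta_{a'}-a'\beta_{a}=:c_{a,a'}(x,y)$, which is regular on both charts and hence a polynomial, and if some $c_{a,a'}(0,0)\neq 0$ then, since $A_{g}\subseteq(x,y)$, affineness follows at once. (In your own example this is the identity $(x^{2}-y)\phi_{xy}-xy\,\phi_{x^{2}-y}=1$, and for $\mathrm{SL}_{2}$ it is $x\phi_{y}-y\phi_{x}=1$.) But the claim that for \emph{every} nontrivial $g$ there exist $a,a'\in A_{g}$ with $c_{a,a'}(0,0)\neq 0$ --- i.e.\ that the antisymmetric pairing $(a,a')\mapsto c_{a,a'}(0,0)$ on $A_{g}$ is not identically zero --- is exactly the substantive content of the proposition, and your argument neither states nor proves it. As written, the proof only re-establishes that $X$ is quasi-affine.
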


\begin{proof}
See also \cite{DubFinMet2009} for an alternative argument.
The condition is necessary since the total space of the trivial $\mathbb{A}%
^{1}$-bundle $\mathbb{A}_{\ast }^{2}\times \mathbb{A}^{1}$ is again strictly
quasi-affine. Conversely, it is equivalent to show that if $\rho
:X\rightarrow \mathbb{A}_{\ast }^{2}$ is a nontrivial $\mathbb{G}_{a}$%
-bundle then the composition 
\begin{equation*}
i\circ \rho :X\rightarrow \mathbb{A}_{\ast }^{2}\hookrightarrow \mathbb{A}%
^{2}=\mathrm{Spec}(\Gamma (\mathbb{A}_{\ast }^{2},\mathcal{O}_{\mathbb{A}%
_{\ast }^{2}}))
\end{equation*}%
is an affine morphism. Let $o=\left( 0,0\right) $ be the origin of $\mathbb{A%
}^{2}$ and $S=\mathrm{Spec}(\mathcal{O}_{\mathbb{A}^{2},o})\rightarrow 
\mathbb{A}^{2}$ be the natural morphism. Since $\rho :X\rightarrow \mathbb{A}%
_{\ast }^{2}$ is an affine morphism, $i\circ \rho $ is affine if and only if
the base extension $\mathrm{p}_{1}:S\times _{\mathbb{A}^{2}}X\rightarrow S$
is as well. It follows from a characterization due to Miyanishi \cite%
{Miyanishi1988} that either the restriction of $\mathrm{p}_{1}$ over the
complement of the closed point $o$ of $S$ is a trivial $\mathbb{A}^{1}$%
-bundle or $S\times _{\mathbb{A}^{2}}X$ is isomorphic to a closed subscheme
of $S\times \mathrm{Spec}\left( \mathbb{C}\left[ u,v\right] \right) $
defined by an equation of the form $fv-gu=1$, where $f,g\in \mathcal{O}_{%
\mathbb{A}^{2},o}$ is a regular sequence. In the second case, $\mathrm{p}%
_{1}^{-1}\left( S\right) =\mathrm{p}_{1}^{-1}(S\setminus \left\{ o\right\} )$
is an affine scheme, and so $\mathrm{p}_{1}$ is an affine morphism.
Otherwise, if $\mathrm{p}_{1}^{-1}(S\setminus \left\{ o\right\} )\rightarrow
S\setminus \left\{ o\right\} $ is the trivial $\mathbb{A}^{1}$-bundle, then
there exists an open subset $U$ of $\mathbb{A}^{2}$ containing $o$ such that 
$\rho ^{-1}(U\setminus \left\{ o\right\} )\rightarrow U\setminus \left\{
o\right\} $ is a trivial $\mathbb{G}_{a}$-bundle. Therefore, $\rho
:X\rightarrow \mathbb{A}_{\ast }^{2}$ can be extended to a $\mathbb{G}_{a}$%
-bundle $\overline{\rho }:\overline{X}\rightarrow \mathbb{A}^{2}$ over $%
\mathbb{A}^{2}$. But $\mathbb{A}^{2}$ is affine, so $\overline{\rho }:%
\overline{X}\rightarrow \mathbb{A}^{2}$ is a trivial $\mathbb{G}_{a}$%
-bundle, and as would be $\rho :X\rightarrow \mathbb{A}_{\ast }^{2}$,
contradicting our hypothesis.
\end{proof}

\begin{example}
\label{exa:LocalDesc} Every non trivial class in $H^{1}(\mathbb{A}_{\ast
}^{2},\mathcal{O}_{\mathbb{A}_{\ast }^{2}})$ can be represented by a \v{C}%
ech $1$-cocycle of the form $x^{-m}y^{-n}p\left( x,y\right) \in C^{1}(%
\mathcal{U}_{0},\mathcal{O}_{\mathbb{A}_{\ast }^{2}})=\mathbb{C}\left[
x^{\pm 1},y^{\pm 1}\right] $, where $m,n\in \mathbb{N}\setminus \left\{
0\right\} $ and $p\left( x,y\right) \in \mathbb{C}\left[ x,y\right] $ is a
polynomial divisible neither by $x$ nor by $y$ and satisfying $\deg _{x}p<m$
and $\deg _{y}p<n$. A corresponding $\mathbb{A}^{1}$-bundle $\rho :X\left(
m,n,p\right) \rightarrow \mathbb{A}_{\ast }^{2}$ is obtained as the
complement in the variety 
\begin{equation*}
Z_{m,n,p}=\left\{ x^{m}v-y^{n}u=p\left( x,y\right) \right\} \subset \mathbb{A%
}_{\ast }^{2}\times \mathrm{Spec}\left( \mathbb{C}\left[ u,v\right] \right)
\end{equation*}%
of the fiber $\mathrm{pr}_{x,y}\mid _{Z_{m,n,p}}^{-1}\left( 0,0\right) $.
The latter is a $\mathbb{G}_{a}$-bundle when equipped for instance with the
restriction of the $\mathbb{G}_{a}$-action $t\cdot \left( x,y,u,v\right)
=\left( x,y,u+x^{m}t,v+y^{n}v\right) $ on $Z_{m,n,p}$. Indeed, by
construction of $X\left( m,n,p\right) $, with respect to the local $\mathbb{G%
}_{a}$-equivariant trivializations 
\begin{equation*}
X\left( m,n,p\right) \mid _{U_{x}}\simeq U_{x}\times \mathrm{Spec}\left( 
\mathbb{C}\left[ x^{-m}u\right] \right) \simeq U_{x}\times \mathbb{G}%
_{a}\quad \text{and}\quad X\left( m,n,p\right) \mid _{U_{y}}\simeq
U_{y}\times \mathrm{Spec}\left( \mathbb{C}\left[ y^{-n}v\right] \right)
\simeq U_{y}\times \mathbb{G}_{a}
\end{equation*}
the fiber coordinates differ precisely by the \v{C}ech $1$-cocycle $%
x^{-m}y^{-n}p\left( x,y\right) \in C^{1}(\mathcal{U}_{0},\mathcal{O}_{%
\mathbb{A}_{\ast }^{2}})$.
\end{example}

\begin{parn}
\label{par:cancelRq} A consequence of Proposition \ref{pro:AffineCrit} above
is that the cylinders $X\times \mathbb{A}^{1}$ over the total spaces of
nontrivial $\mathbb{G}_{a}$-bundles $\rho :X\rightarrow \mathbb{A}_{\ast
}^{2}$ are not only all diffeomorphic in the euclidean topology but even all
isomorphic as algebraic varieties. Indeed, given to such bundles $X$ and $%
X^{\prime }$, the fiber product $X\times _{\mathbb{A}_{\ast }^{2}}X^{\prime
} $ is a $\mathbb{G}_{a}$-bundle over both $X$ and $X^{\prime }$ via the
first and the second projections respectively. Since $X$ and $X^{\prime }$
are affine, the latter are both trivial as bundles over $X$ and $X^{\prime }$ 
respectively providing isomorphisms $X\times \mathbb{A}%
^{1}\simeq X\times _{\mathbb{A}_{\ast }^{2}}X^{\prime }\simeq X^{\prime
}\times \mathbb{A}^{1}$. This implies in turn that most of the
\textquotedblleft standard\textquotedblright\ invariants of algebraic
varieties which are either of topologico-differentiable nature or stable
under taking cylinders fail to distinguish total spaces of nontrivial $%
\mathbb{G}_{a}$-bundles $X\rightarrow \mathbb{A}_{\ast }^{2}$ considered as
abstract affine varieties. For instance, algebraic vector bundles on the
total space of a nontrivial $\mathbb{A}^{1}$-bundle $\rho :X\rightarrow 
\mathbb{A}_{\ast }^{2} $ are all trivial. Indeed, this holds for $\mathrm{SL}%
_{2}\left( \mathbb{C}\right) $ by virtue of \cite{Murthy2000} and the fact
that vector bundles on $X\times \mathbb{A}^{1}\simeq \mathrm{SL}_{2}\left( 
\mathbb{C}\right) \times \mathbb{A}^{1}$ are simultaneously extended from
vector bundles on $\mathrm{SL}_{2}\left( \mathbb{C}\right) $ and $X$ \cite%
{Lindel1981/82} implies that this holds for arbitrary affine $X$ too.
\end{parn}

\begin{rem}
\label{rem:ML-triv} In the context of affine varieties $V$ with $\mathbb{G}%
_{a}$-actions, the \emph{Makar-Limanov invariant} $\mathrm{ML}\left(
V\right) $, defined as the algebra of regular functions on $V$ that are
invariant under \emph{all} algebraic $\mathbb{G}_{a}$-actions on $V$, has
been introduced and used by Makar-Limanov \cite{Makar-Limanov1996} to
distinguish certain exotic algebraic structures on the affine $3$-space.
Clearly, $\mathrm{ML}\left( \mathrm{SL}_{2}\left( \mathbb{C}\right) \right) =%
\mathbb{C}$, and since this invariant is known to be unstable under taking
cylinders \cite{BandmanML2005} \cite{DuboulozTG2009}, it is a natural
candidate for distinguishing certain $\mathbb{A}^{1}$-bundles $\rho
:X\rightarrow \mathbb{A}_{\ast }^{2}$ from $\mathrm{SL}_{2}\left( \mathbb{C}%
\right) $. However, one checks easily using the explicit description in \ref%
{exa:LocalDesc} above that $\mathrm{ML}\left( X\right) =\mathbb{C}$ for
every non trivial $\mathbb{A}^{1}$-bundle $\rho :X\rightarrow \mathbb{A}%
_{\ast }^{2}$. Actually, if $X=X\left( m,n,p\right) $, where $p\in \mathbb{C}%
\left[ x,y\right] $ is a homogeneous polynomial, then Theorem \ref%
{thm:HomBunIsoType} below implies that the total space of $X$ is isomorphic
to $X_{1,r}=\left\{ xv-y^{r}u=1\right\} $, where $r+1=m+n-\deg p\geq 2$,
which is even a \emph{flexible variety}, i.e., the tangent space at every point
of $x$ is spanned by the tangent vectors to the orbits of the $\mathbb{G}_{a}
$-actions on $X$ \cite{ArzhKuyumZai2010}. We do not know if this additional property
holds for general $\mathbb{A}^{1}$-bundles $\rho :X\rightarrow \mathbb{A}%
_{\ast }^{2}$.
\end{rem}

\subsection{\label{sub:HomBunMain} Homogeneous $\mathbb{G}_{a}$-bundles}

\indent\newline
\noindent Here we develop the notion of homogeneous $\mathbb{G}_{a}$-bundle
over $\mathbb{A}_{\ast }^{2}$ that will be used in the rest of the article.

\begin{parn}
Let $\sigma :\mathbb{G}_{m}\times \mathbb{A}_{\ast }^{2}\rightarrow \mathbb{A%
}_{\ast }^{2}$ denote the linear $\mathbb{G}_{m}$-action on $\mathbb{A}%
_{\ast }^{2}$ with quotient $\pi :\mathbb{A}_{\ast }^{2}\rightarrow \mathbb{P%
}^{1}=\mathrm{Proj}\left( \mathbb{C}\left[ x,y\right] \right) $. Since $\pi $
is an affine morphism and $\pi _{\ast }\mathcal{O}_{\mathbb{A}_{\ast
}^{2}}\simeq \bigoplus_{d\in \mathbb{Z}}\mathcal{O}_{\mathbb{P}^{1}}\left(
-d\right) $, we have a decomposition 
\begin{equation*}
H^{1}(\mathbb{A}_{\ast }^{2},\mathcal{O}_{\mathbb{A}_{\ast }^{2}})\simeq
H^{1}(\mathbb{P}^{1},\pi _{\ast }\mathcal{O}_{\mathbb{A}_{\ast }^{2}})\simeq
\bigoplus_{d\in \mathbb{Z}}H^{1}(\mathbb{P}^{1},\mathcal{O}_{\mathbb{P}%
^{1}}\left( -d\right) )\simeq \bigoplus_{d\geq 2}H^{1}(\mathbb{P}^{1},%
\mathcal{O}_{\mathbb{P}^{1}}\left( -d\right) ),
\end{equation*}%
where for every $d$, $H^{1}(\mathbb{P}^{1},\mathcal{O}_{\mathbb{P}%
^{1}}\left( -d\right) )$ can be identified with the vector space of the
semi-invariants of weight $-d$ for the representation of $\mathbb{G}_{m}$ on 
$H^{1}(\mathbb{A}_{\ast }^{2},\mathcal{O}_{\mathbb{A}_{\ast }^{2}})$ induced
by $\sigma $. Via the one-to-one correspondence between coverings $\mathcal{V%
}=\left( V_{i}\right) _{i\in I}$ of $\mathbb{P}^{1}=\mathbb{A}_{\ast }^{2}/%
\mathbb{G}_{m}$ by affine open subsets $V_{i}$, $i\in I$ and coverings $%
\mathcal{U}=\left( U_{i}\right) _{i\in I}$ of $\mathbb{A}_{\ast }^{2}$ by $%
\mathbb{G}_{m}$-stable affine open subsets $U_{i}=\pi ^{-1}\left(
V_{i}\right) $, $i\in I$, a cohomology class in $H^{1}(\mathbb{P}^{1},\pi
_{\ast }\mathcal{O}_{\mathbb{A}_{\ast }^{2}})$ belongs to $H^{1}(\mathbb{P}%
^{1},\mathcal{O}_{\mathbb{P}^{1}}\left( -d\right) )$ if and only if it can
be represented by a \v{C}ech $1$-cocycle $\left\{ h_{ij}\right\} _{i,j\in
I}\in C^{1}(\mathcal{V},\mathcal{O}_{\mathbb{P}^{1}}\left( -d\right)
)\subset C^{1}(\mathcal{V},\pi _{\ast }\mathcal{O}_{\mathbb{A}_{\ast
}^{2}})\simeq C^{1}(\mathcal{U},\mathcal{O}_{\mathbb{A}_{\ast }^{2}})$
consisting of rational functions $h_{ij}\in \Gamma (U_{i}\cap U_{j},\mathcal{%
O}_{\mathbb{A}_{\ast }^{2}})\subset \mathbb{C}\left( x,y\right) $ that are
homogeneous of degree $-d$. These cocycles correspond precisely to $\mathbb{G%
}_{a}$-bundles $\tilde{\rho}:\tilde{X}\rightarrow \mathbb{A}_{\ast }^{2}$
with local trivializations $\tau _{i}:\tilde{X}\mid _{U_{i}}\overset{\sim }{%
\rightarrow }U_{i}\times \mathbb{G}_{a}$ for which the isomorphisms $\tau
_{i}\circ \tau _{j}^{-1}\mid _{U_{i}\cap U_{j}}$, $(u,t_{j})\mapsto
(u,t_{j}+h_{ij}\left( u\right) )$, $i,j\in I$, are equivariant for the
actions of $\mathbb{G}_{m}$ on $U_{i}\times \mathbb{G}_{a}$ and $U_{j}\times 
\mathbb{G}_{a}$ by $\lambda \cdot \left( u,t\right) =(\sigma \left( \lambda
,u\right) ,\lambda ^{-d}t)$. This leads to the following interpretation of
the above decomposition in terms of $\mathbb{G}_{a}$-bundles over $\mathbb{A}%
_{\ast }^{2}$.
\end{parn}

\begin{prop}
\label{pro:HomBunChar} For a $\mathbb{G}_{a}$-bundle $\rho:X\rightarrow%
\mathbb{A}_{*}^{2}$, the following are equivalent:

a) The isomorphy class of $\rho:X\rightarrow\mathbb{A}_{*}^{2}$ belongs to $%
H^{1}(\mathbb{P}^{1},\mathcal{O}_{\mathbb{P}^{1}}\left(-d\right))\subset
H^{1}(\mathbb{A}_{*}^{2},\mathcal{O}_{\mathbb{A}_{*}^{2}})$,

b) $\rho:X\rightarrow\mathbb{A}_{*}^{2}$ is isomorphic to a $\mathbb{G}_{a}$%
-bundle $\tilde{\rho}:\tilde{X}\rightarrow\mathbb{A}_{*}^{2}$ admitting a
lift $\tilde{\sigma}:\mathbb{G}_{m}\times\tilde{X}\rightarrow\tilde{X}$ of $%
\sigma$ for which there exists a collection of $\mathbb{G}_{a}$-equivariant
local trivializations $\tau_{i}:\tilde{X}\mid_{U_{i}}\overset{\sim}{%
\rightarrow}U_{i}\times\mathbb{G}_{a}$ over a covering of $\mathbb{A}%
_{*}^{2} $ by $\mathbb{G}_{m}$-stable affine open subsets $%
\left(U_{i}\right)_{i\in I} $ such that for every $i\in I$, $\tau_{i}$ is $%
\mathbb{G}_{m}$-equivariant for the action of $\mathbb{G}_{m}$ on $%
U_{i}\times\mathbb{G}_{a}$ defined by $\lambda\cdot\left(u,t\right)=\left(%
\sigma\left(\lambda,u\right),\lambda^{-d}t\right)$.
\end{prop}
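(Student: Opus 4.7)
The plan is to deduce both implications directly from the one-to-one correspondence between \v{C}ech $1$-cocycles and $\mathbb{G}_{a}$-bundles set up in the paragraph preceding the statement, exploiting the fact that $\mathbb{G}_{m}$-equivariance of transition functions with respect to weighted fiber actions is equivalent to the appropriate homogeneity of the corresponding \v{C}ech cocycle.

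For (a)$\Rightarrow$(b), I would start from a representative of the isomorphy class of $\rho$ by a \v{C}ech $1$-cocycle $\{h_{ij}\}$ with values in $\mathcal{O}_{\mathbb{P}^{1}}(-d)$ on an affine cover $\mathcal{V}=(V_{i})_{i\in I}$ of $\mathbb{P}^{1}$, and let $\mathcal{U}=(U_{i})_{i\in I}$ be the $\mathbb{G}_{m}$-stable affine cover of $\mathbb{A}_{\ast }^{2}$ obtained by pulling back $\mathcal{V}$ along $\pi$. By construction, each $h_{ij}\in \Gamma (U_{i}\cap U_{j},\mathcal{O}_{\mathbb{A}_{\ast }^{2}})\subset \mathbb{C}(x,y)$ is homogeneous of degree $-d$ for $\sigma$. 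Then I would construct $\tilde{X}$ by gluing copies of $U_{i}\times \mathbb{G}_{a}$ via $(u,t_{j})\mapsto (u,t_{j}+h_{ij}(u))$ and equip each chart with the $\mathbb{G}_{m}$-action $\lambda \cdot (u,t)=(\sigma (\lambda ,u),\lambda ^{-d}t)$. The identity $h_{ij}(\sigma (\lambda ,u))=\lambda ^{-d}h_{ij}(u)$, which expresses the homogeneity of $h_{ij}$, is precisely what is needed for the gluing isomorphisms to intertwine these local $\mathbb{G}_{m}$-actions, so they descend to a global lift $\tilde{\sigma}$ of $\sigma$ to $\tilde{X}$, and the canonical trivializations $\tau _{i}$ are tautologically both $\mathbb{G}_{a}$- and $\mathbb{G}_{m}$-equivariant. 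Finally, since $\tilde{X}$ and $X$ correspond to the same \v{C}ech cocycle, they are isomorphic as $\mathbb{G}_{a}$-bundles over $\mathbb{A}_{\ast }^{2}$.

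For (b)$\Rightarrow$(a), I would start from $\tilde{X}$ together with the trivializations $\tau _{i}$ and read off the transition data $\tau _{i}\circ \tau _{j}^{-1}(u,t_{j})=(u,t_{j}+h_{ij}(u))$ for some $h_{ij}\in \Gamma (U_{i}\cap U_{j},\mathcal{O}_{\mathbb{A}_{\ast }^{2}})$. Spelling out the $\mathbb{G}_{m}$-equivariance of $\tau _{i}\circ \tau _{j}^{-1}$ with respect to the weighted actions $\lambda \cdot (u,t)=(\sigma (\lambda ,u),\lambda ^{-d}t)$ on source and target yields the relation $h_{ij}(\sigma (\lambda ,u))=\lambda ^{-d}h_{ij}(u)$. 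Hence each $h_{ij}$ is a section of $\mathcal{O}_{\mathbb{P}^{1}}(-d)$ over $V_{i}\cap V_{j}$, and the isomorphy class of $\rho \simeq \tilde{\rho}$ therefore lies in the summand $H^{1}(\mathbb{P}^{1},\mathcal{O}_{\mathbb{P}^{1}}(-d))$ of $H^{1}(\mathbb{A}_{\ast }^{2},\mathcal{O}_{\mathbb{A}_{\ast }^{2}})$.

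The statement is essentially a structured reformulation of the decomposition discussed in the preceding paragraph, so there is no serious conceptual obstacle beyond bookkeeping. The only point that demands some care is checking, in the first half of the argument, that the local $\mathbb{G}_{m}$-actions defined on each chart $U_{i}\times \mathbb{G}_{a}$ glue to a well-defined global $\mathbb{G}_{m}$-action on $\tilde{X}$; this is precisely where the matching between the weight $-d$ chosen on the fibers and the degree $-d$ of the cocycle enters in an essential way, and is what pins down the particular integer $d$ appearing in both formulations.
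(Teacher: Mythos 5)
Your argument is correct and is essentially the same as the paper's: the paper gives no separate proof of this proposition, treating it as an immediate reformulation of the preceding paragraph, which already records that homogeneity of degree $-d$ of the cocycle $\{h_{ij}\}$ is equivalent to $\mathbb{G}_{m}$-equivariance of the transition maps for the weighted fiber actions $\lambda\cdot(u,t)=(\sigma(\lambda,u),\lambda^{-d}t)$. Your write-up just makes explicit the gluing of the local $\mathbb{G}_{m}$-actions in one direction and the extraction of the homogeneity relation in the other, which is exactly the intended bookkeeping.
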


\begin{defn}
A nontrivial $\mathbb{G}_{a}$-bundle $\rho:X\rightarrow\mathbb{A}_{*}^{2}$
satisfying one of the above equivalent properties for a certain $d\geq2$ is
said to be $-d$\emph{-homogeneous}.
\end{defn}

\begin{example}
\label{exa:Equiv-Can-cover} By specializing to the covering $\mathcal{U}_{0}$
of $\mathbb{A}_{\ast }^{2}$ by the $\mathbb{G}_{m}$-stable principal open
subsets $U_{x}$ and $U_{y}$, we obtain a more explicit decomposition: 
\begin{equation*}
H^{1}(\mathbb{A}_{\ast }^{2},\mathcal{O}_{\mathbb{A}_{\ast }^{2}})\simeq 
\check{H}^{1}(\mathcal{U}_{0},\mathcal{O}_{\mathbb{A}_{\ast }^{2}})\simeq
\bigoplus_{d\geq 2}W_{-d}
\end{equation*}
where, for every $d\geq 2$, $W_{-d}\simeq H^{1}(\mathbb{P}^{1},\mathcal{O}_{%
\mathbb{P}^{1}}\left( -d\right) )$ denotes the sub-$\mathbb{C}$-vector space
of $\mathbb{C}\left[ x^{\pm 1},y^{\pm 1}\right] $ with basis $\mathcal{B}%
_{d} $ consisting of rational monomials $x^{-m}y^{-n}$ where $m,n\in \mathbb{%
N}\setminus \{0\}$ and $m+n=d$. Note that letting $V_{d-2}\simeq H^{0}(%
\mathbb{P}^{1},\mathcal{O}_{\mathbb{P}^{1}}(d-2))$ be the space of binary
forms of degree $d-2$, Serre duality for $\mathbb{P}^{1}$ takes the form of
a perfect pairing $W_{-d}\times V_{d-2}\rightarrow W_{-2}$ for which the
basis $\mathcal{B}_{d}$ is simply the dual of the usual basis of $V_{d-2}$
consisting of monomials $x^{p}y^{q}$ with $p+q=d-2$. Therefore, every non
trivial class in $H^{1}(\mathbb{P}^{1},\mathcal{O}_{\mathbb{P}^{1}}(-d))$ is
represented by a \v{C}ech $1$-cocycle of the form $x^{-m}y^{-n}p\left(
x,y\right) \in C^{1}(\mathcal{U}_{0},\mathcal{O}_{\mathbb{A}_{\ast }^{2}})$,
where $p\left( x,y\right) \in \mathbb{C}\left[ x,y\right] $ is a homogeneous
polynomial of degree $r=m+n-d\geq 0$. The corresponding $\mathbb{G}_{a}$%
-bundle $\tilde{X}=X\left( m,n,p\right) =\left\{ x^{m}v-y^{n}u=p\left(
x,y\right) \right\} \setminus \left\{ x=y=0\right\} $ as in \ref%
{exa:LocalDesc} admits an obvious lift $\tilde{\sigma}\left( \lambda ,\left(
x,y,u,v\right) \right) =\left( \lambda x,\lambda y,\lambda ^{m-d}u,\lambda
^{n-d}v\right) $ of the $\mathbb{G}_{m}$-action $\sigma $ on $\mathbb{A}%
_{\ast }^{2}$ which satisfies b) in Proposition \ref{pro:HomBunChar} above.
\end{example}

\begin{parn}
\label{par:HomB-Princ} Proposition \ref{pro:HomBunChar} can be interpreted
from another point of view as a correspondence between $-d$-homogeneous $%
\mathbb{G}_{a}$-bundles $\rho :X\rightarrow \mathbb{A}_{\ast }^{2}$ and 
\emph{principal homogeneous bundles} $\nu :Y\rightarrow \mathbb{P}^{1}$ 
\emph{under the line bundle} $p:\mathcal{O}_{\mathbb{P}^{1}}\left( -d\right)
\rightarrow \mathbb{P}^{1}$. Here we consider the line bundle $p:\mathcal{O}%
_{\mathbb{P}^{1}}\left( -d\right) \rightarrow \mathbb{P}^{1}$ as equipped
with the structure of a locally constant group scheme over $\mathbb{P}^{1}$,
with group law induced by the diagonal homomorphism of sheaves $\mathcal{O}_{%
\mathbb{P}^{1}}\left( d\right) \rightarrow \mathcal{O}_{\mathbb{P}%
^{1}}\left( d\right) \oplus \mathcal{O}_{\mathbb{P}^{1}}\left( d\right) $. A
principal homogeneous $\mathcal{O}_{\mathbb{P}^{1}}\left( -d\right) $-bundle
(or simply an $\mathcal{O}_{\mathbb{P}^{1}}\left( -d\right) $-bundle) is a
scheme $\nu :Y\rightarrow \mathbb{P}^{1}$ equipped with an action of $%
\mathcal{O}_{\mathbb{P}^{1}}\left( -d\right) $ such that every point of $%
\mathbb{P}^{1}$ has a Zariski open neighborhood $U$ such that $Y\mid _{U}$
is equivariantly isomorphic to $\mathcal{O}_{\mathbb{P}^{1}}\left( -d\right)
\mid _{U}$ acting on itself by translations. Isomorphy classes of $\mathcal{O%
}_{\mathbb{P}^{1}}\left( -d\right) $-bundles are in one-to-one
correspondence with elements of the group $H^{1}(\mathbb{P}^{1},\mathcal{O}_{%
\mathbb{P}^{1}}\left( -d\right) )$. Example \ref{exa:HomBunDesc} below shows
that for a $\mathbb{G}_{a}$-bundle $\tilde{\rho}:\tilde{X}\rightarrow 
\mathbb{A}_{\ast }^{2}$ equipped with a lift $\tilde{\sigma}$ of $\sigma $
as in b), the quotient $\mathbb{A}^{1}$-bundle $\nu :\tilde{X}/\mathbb{G}%
_{m}\rightarrow \mathbb{P}^{1}=\mathbb{A}_{\ast }^{2}/\mathbb{G}_{m}$ comes
naturally equipped with the structure of principal homogeneous $\mathcal{O}_{%
\mathbb{P}^{1}}\left( -d\right) $-bundle with isomorphy class $\gamma =[%
\tilde{X}]\in H^{1}(\mathbb{P}^{1},\mathcal{O}_{\mathbb{P}^{1}}\left(
-d\right) )\subset H^{1}(\mathbb{A}_{\ast }^{2},\mathcal{O}_{\mathbb{A}%
_{\ast }^{2}})$.
\end{parn}

\begin{example}
\label{exa:HomBunDesc} By virtue of example \ref{exa:Equiv-Can-cover} above,
every nontrivial $-d$-homogeneous $\mathbb{G}_{a}$-bundle is isomorphic to
one of the form $\tilde{X}=X\left( m,n,p\right) $, where $p\left(
x,y\right) \in \mathbb{C}\left[ x,y\right] $ is homogeneous of degree $%
r=m+n-d\geq 0$. The latter is equipped with the lift $\tilde{\sigma}\left(
\lambda ,\left( x,y,u,v\right) \right) =\left( \lambda x,\lambda y,\lambda
^{m-d}u,\lambda ^{n-d}v\right) $ of $\sigma $ for which we have local $%
\mathbb{G}_{m}$-equivariant trivializations 
\begin{eqnarray*}
\tilde{X}\mid _{U_{x}}\simeq \mathrm{Spec}\left( \mathbb{C}\left[
x^{-1}y,x^{d-m}u\right] \right) \times \mathbb{G}_{m} &=&\mathrm{Spec}\left( 
\mathbb{C}\left[ z,w\right] \right) \times \mathbb{G}_{m} \\
\tilde{X}\mid _{U_{y}}\simeq \mathrm{Spec}\left( \mathbb{C}\left[
xy^{-1},y^{d-n}v\right] \right) \times \mathbb{G}_{m} &=&\mathrm{Spec}\left( 
\mathbb{C}\left[ z^{\prime },w^{\prime }\right] \right) \times \mathbb{G}%
_{m}.
\end{eqnarray*}%
These induce trivializations $\tau _{x}:\nu ^{-1}\left( U_{x}/\mathbb{G}%
_{m}\right) \overset{\sim }{\rightarrow }\mathrm{Spec}\left( \mathbb{C}\left[
z\right] \left[ w\right] \right) $ and $\tau _{y}:\nu ^{-1}\left( U_{y}/%
\mathbb{G}_{m}\right) \overset{\sim }{\rightarrow }\mathrm{Spec}\left( 
\mathbb{C}\left[ z^{\prime }\right] \left[ w^{\prime }\right] \right) $ of
the quotient bundle $\nu :\tilde{X}/\mathbb{G}_{m}\rightarrow \mathbb{P}^{1}$
for which the transition isomorphism $\tau _{y}\circ \tau _{x}^{-1}\mid
_{U_{x}\cap U_{x}/\mathbb{G}_{m}}$ has the form $\left( z,w\right) \mapsto
\left( z^{-1},z^{d}w+z^{m}p\left( z^{-1},1\right) \right) $. To see
explicitly the structure of an $O_{\mathbb{P}^{1}}(-d)$ bundle on $\tilde{X}/%
\mathbb{G}_{m},$ choose coordinates for the local trivializations of the
total space of $O_{\mathbb{P}^{1}}(-d)$ as follows : 
\begin{eqnarray*}
\mathcal{O}_{\mathbb{P}^{1}}\left( -d\right) \mid _{U_{x}/\mathbb{G}%
_{m}}\simeq \mathrm{Spec}\left( \mathbb{C}\left[ x^{-1}y,x^{d}t\right]
\right) &=&\mathrm{Spec}\left( \mathbb{C}\left[ z,\ell \right] \right) \\
\mathcal{O}_{\mathbb{P}^{1}}\left( -d\right) \mid _{U_{y}/\mathbb{G}%
_{m}}\simeq \mathrm{Spec}\left( \mathbb{C}\left[ xy^{-1},y^{d}t\right]
\right) &=&\mathrm{Spec}\left( \mathbb{C}\left[ z^{\prime },\ell ^{\prime }%
\right] \right) .
\end{eqnarray*}%
Then we see that the $\mathbb{G}_{a}$-action $t\cdot \left( x,y,u,v\right)
=\left( x,y,u+x^{m}t,v+y^{n}v\right) $ on $\tilde{X}$ descends to the action
of $\mathcal{O}_{\mathbb{P}^{1}}\left( -d\right) $ on $\tilde{X}/\mathbb{G}%
_{m}$ defined locally by $\ell \cdot w=w+\ell $ and $\ell ^{\prime }\cdot
w^{\prime }=w^{\prime }+\ell ^{\prime }$, which equips $\tilde{X}/\mathbb{G}%
_{m}$ with the structure of an $\mathcal{O}_{\mathbb{P}^{1}}\left( -d\right) 
$-bundle. Finally, with our choice of coordinate, the natural isomorphism of $
\mathbb{C}$-vectorspaces 
\begin{equation*}
\phi :C^{1}(\{U_{x},U_{y}\},\mathcal{O}_{\mathbb{A}_{\ast }^{2}})=\mathbb{C}%
\left[ x^{\pm 1},y^{\pm 1}\right] \overset{\sim }{\rightarrow }C^{1}(\{U_{x}/%
\mathbb{G}_{m},U_{y}/\mathbb{G}_{m}\},\pi _{\ast }\mathcal{O}_{\mathbb{A}%
_{\ast }^{2}})=\mathbb{C}\left[ z^{\pm 1}\right]
\end{equation*}%
maps a Laurent monomial $x^{i}y^{j}$ to $z^{-i}$, whence sends the \v{C}ech
cocycle $x^{-m}y^{-n}p\left( x,y\right) $ representing the isomorphy class
of the $\mathbb{G}_{a}$-bundle $\tilde{\rho}:\tilde{X}\rightarrow \mathbb{A}%
_{\ast }^{2}$ to the one $z^{m}p\left( z^{-1},1\right) $ representing the
isomorphy class of the $\mathcal{O}_{\mathbb{P}^{1}}\left( -d\right) $%
-bundle $\nu :\tilde{X}/\mathbb{G}_{m}\rightarrow \mathbb{P}^{1}$.
\end{example}

\begin{rem}
The precise correspondence between $-d$-homogeneous $\mathbb{G}_{a}$-bundles
over $\mathbb{A}_{\ast }^{2}$ and principal $\mathcal{O}_{\mathbb{P}%
^{1}}\left( -d\right) $-bundles takes the form of an equivalence of
categories extending the one between $\mathbb{G}_{m}$-linearized line
bundles on $\mathbb{A}_{\ast }^{2}$ with respect to the action 
$\sigma : \mathbb{G}_m \times \mathbb{A}_{\ast }^{2} \rightarrow \mathbb{A}_{\ast }^{2}$ 
and line bundles over $\mathbb{P}^{1}=\mathbb{A}_{\ast }^{2}/\mathbb{G}_{m}$. 
Recall that a $\mathbb{G}_{m}$-linearized line bundle is a pair $(L,\Phi )$ consisting of a line bundle 
$p:L\rightarrow \mathbb{A}_{\ast }^{2}$ and an isomorphism 
$$\Phi :\sigma
^{\ast }L=(\mathbb{G}_m\times \mathbb{A}_{\ast }^{2})\times_{\sigma, \mathbb{A}_{\ast }^{2}} L\overset{\sim }{\longrightarrow } \mathrm{p}_{2}^{\ast }L=(\mathbb{G}_m\times \mathbb{A}_{\ast }^{2})\times_{\mathrm{p}_2, \mathbb{A}_{\ast }^{2}} L$$  of line
bundles over $\mathbb{G}_{m}\times \mathbb{A}_{\ast }^{2}$ satisfying the
cocycle condition $(\mu \times \mathrm{id}_{\mathbb{A}_{\ast }^{2}})^{\ast
}\Phi =\mathrm{p}_{23}^{\ast }\Phi \circ (\mathrm{id}_{\mathbb{G}_{m}}\times
\sigma )^{\ast }\Phi $ over $\mathbb{G}_{m}\times \mathbb{G}_{m}\times 
\mathbb{A}_{\ast }^{2}$, where $\mu :\mathbb{G}_{m}\times \mathbb{G}%
_{m}\rightarrow \mathbb{G}_{m}$ denotes the group law of $\mathbb{G}_{m}$
(see e.g. \cite[§3, p.30]{Mumford1994}). A standard argument of faithfully
flat descent for the quotient morphism $\pi :\mathbb{A}_{\ast
}^{2}\rightarrow \mathbb{P}^{1}=\mathbb{A}_{\ast }^{2}/\mathbb{G}_{m}$ shows
that the category of $\mathbb{G}_{m}$-linearized line bundles over $\mathbb{A%
}_{\ast }^{2}$ is equivalent to category of line bundles over $\mathbb{P}%
^{1} $. Noting that $\Phi :\sigma ^{\ast }L\overset{\sim }{\rightarrow }%
\mathrm{p}_{2}^{\ast }L$ is an isomorphism of group schemes over $\mathbb{G}%
_{m}\times \mathbb{A}_{\ast }^{2}$, we can define a category $\tilde{\mathbb{%
V}}$ whose objects are pairs $\{(L,\Phi ),(\tilde{X},\Psi )\}$ consisting of
a $\mathbb{G}_{m}$-linearized line bundle $(L,\Phi )$, a principal $L$%
-bundle $\tilde{\rho}:\tilde{X}\rightarrow \mathbb{A}_{\ast }^{2}$, and a $%
\Phi $-equivariant isomorphism $\Psi :\sigma ^{\ast }\tilde{X}\overset{\sim }%
{\rightarrow }\mathrm{p}_{2}^{\ast }\tilde{X}$ of principal bundles under $%
\sigma ^{\ast }L$ and $\mathrm{p}_{2}^{\ast }L$ respectively, satisfying the
cocycle condition $(\mu \times \mathrm{id}_{\mathbb{A}_{\ast }^{2}})^{\ast
}\Psi =\mathrm{p}_{23}^{\ast }\Psi \circ (\mathrm{id}_{\mathbb{G}_{m}}\times
\sigma )^{\ast }\Psi $ over $\mathbb{G}_{m}\times \mathbb{G}_{m}\times 
\mathbb{A}_{\ast }^{2}$. Then one checks that the previous equivalence
extends to a one between $\tilde{\mathbb{V}}$ and the category $\mathbb{V}$
whose objects are pairs $(M,Y)$ consisting of a line bundle $q:M\rightarrow 
\mathbb{P}^{1}$ and a principal $M$-bundle $\nu :Y\rightarrow \mathbb{P}^{1}$%
.

Recall that for a $\mathbb{G}_{m}$-linearized line bundle $(L,\Phi )$ over $%
\mathbb{A}_{\ast }^{2}$ the morphism $\overline{\sigma }=\mathrm{p}_{2}\circ
\Phi ^{-1}:\mathbb{G}_{m}\times L\simeq \sigma ^{\ast }L\rightarrow L$
defines a lift to $L$ of the $\mathbb{G}_{m}$-action $\sigma $ on $\mathbb{A}%
_{\ast }^{2}$ which is {}\textquotedblleft linear on the
fibers\textquotedblright\ of $p:L\rightarrow \mathbb{A}_{\ast }^{2}$.
Similarly, for $(\tilde{X},\Psi )$ as above, the morphism $\tilde{\sigma}=%
\mathrm{p}_{2}\circ \Psi ^{-1}:\mathbb{G}_{m}\times \tilde{X}\simeq \sigma
^{\ast }\tilde{X}\rightarrow \tilde{X}$ is a lift to $\tilde{X}$ of $\sigma $
for which $\tilde{X}$ {}\textquotedblleft locally looks like $L$ equipped
with the action $\overline{\sigma }$\textquotedblright . By specializing to
the case of the trivial line bundle $\mathbb{A}_{\ast }^{2}\times \mathrm{%
Spec}\left( \mathbb{C}\left[ t\right] \right) $ over $\mathbb{A}_{\ast }^{2}$
equipped with the $\mathbb{G}_{m}$-linearization given by the lift $\lambda
\cdot \left( x,y,\ell \right) =\left( \lambda x,\lambda y,\lambda
^{-d}t\right) $ of $\sigma $, which corresponds to the line bundle $\mathcal{%
O}_{\mathbb{P}^{1}}\left( -d\right) \rightarrow \mathbb{P}^{1}$, the
previous equivalence boils down to a one-to-one correspondence between
isomorphy classes of principal $\mathcal{O}_{\mathbb{P}^{1}}\left( -d\right) 
$-bundles over $\mathbb{P}^{1}$ and isomorphy classes of $\mathbb{G}_{a}$%
-bundles $\tilde{\rho}:\tilde{X}\rightarrow \mathbb{A}_{\ast }^{2}$ equipped
with a lift $\tilde{\sigma}:\mathbb{G}_{m}\times \tilde{X}\rightarrow \tilde{%
X}$ of $\sigma $ as in b) in Proposition \ref{pro:HomBunChar}.
\end{rem}

\section{Isomorphy types of total spaces of $\mathbb{A}^{1}$-bundles over $%
\mathbb{A}_{*}^{2}$}

In this section, we give partial answers to the problem of classifying total
spaces of nontrivial $\mathbb{A}^{1}$-bundles over $\mathbb{A}_{*}^{2}$
considered as abstract affine varieties.

\subsection{Base change under the action of $\mathrm{Aut}\left(\mathbb{A}%
_{*}^{2}\right)$}

\indent\newline
\noindent Since we are interested in isomorphy types of total spaces of $%
\mathbb{A}^{1}$-bundles over $\mathbb{A}_{\ast }^{2}$ as abstract varieties,
regardless of the particular $\mathbb{A}^{1}$-bundle structure, a natural
step is to consider these bundles up to a weaker notion of bundle
isomorphism which consists in identifying two nontrivial $\mathbb{A}^{1}$%
-bundles $\rho :X\rightarrow \mathbb{A}_{\ast }^{2}$ and $\rho ^{\prime
}:X^{\prime }\rightarrow \mathbb{A}_{\ast }^{2}$ if there exists a
commutative diagram 
\begin{equation*}
\xymatrix{X' \ar[d]_{\rho'} \ar[r]^{\Psi} & X \ar[d]^{\rho} \\
\mathbb{A}^2_* \ar[r]^{\psi} & \mathbb{A}^2_*}
\end{equation*}%
where $\psi $ and $\Psi $ are isomorphisms. This means equivalently that the
isomorphy classes of $X$ and $X^{\prime }$ in $\mathbb{P}H^{1}(\mathbb{A}%
_{\ast }^{2},\mathcal{O}_{\mathbb{A}_{\ast }^{2}})$ belong to the same orbit
of the action of the group $\mathrm{Aut}\left( \mathbb{A}_{\ast }^{2}\right) 
$ of automorphisms of $\mathbb{A}_{\ast }^{2}$ on $\mathbb{P}H^{1}(\mathbb{A}%
_{\ast }^{2},\mathcal{O}_{\mathbb{A}_{\ast }^{2}})$ induced by the linear
representation 
\begin{equation*}
\eta :\mathrm{Aut}\left( \mathbb{A}_{\ast }^{2}\right) \rightarrow \mathrm{GL%
}(H^{1}(\mathbb{A}_{\ast }^{2},\mathcal{O}_{\mathbb{A}_{\ast }^{2}})),\;\psi
\mapsto \eta \left( \psi \right) =\psi ^{\ast }:H^{1}(\mathbb{A}_{\ast }^{2},%
\mathcal{O}_{\mathbb{A}_{\ast }^{2}})\overset{\sim }{\rightarrow }H^{1}(%
\mathbb{A}_{\ast }^{2},\mathcal{O}_{\mathbb{A}_{\ast }^{2}})
\end{equation*}%
of $\mathrm{Aut}\left( \mathbb{A}_{\ast }^{2}\right) $ on $H^{1}(\mathbb{A}%
_{\ast }^{2},\mathcal{O}_{\mathbb{A}_{\ast }^{2}})$, where $\psi ^{\ast }$
maps the isomorphy class of $\mathbb{G}_{a}$-bundle $\rho :X\rightarrow 
\mathbb{A}_{\ast }^{2}$ to that of the $\mathbb{G}_{a}$-bundle $\mathrm{pr}%
_{2}:X\times _{\rho ,\mathbb{A}_{\ast }^{2},\psi }\mathbb{A}_{\ast
}^{2}\rightarrow \mathbb{A}_{\ast }^{2}$.

\begin{parn}
\label{par:GL2-rep} The group of automorphisms of $\mathbb{A}_{\ast }^{2}$
can be identified with the subgroup of $\mathrm{Aut}\left( \mathbb{A}%
^{2}\right) $ consisting of automorphisms of the plane $\mathbb{A}^{2}$ that
preserve the origin $o$. As a consequence of Jung's Theorem \cite{Jung1942}, 
$\mathrm{Aut}\left( \mathbb{A}_{\ast }^{2}\right) $ is generated by the
general linear group $\mathrm{GL}_{2}\left( \mathbb{C}\right) $ and the
subgroup $U\subset \mathrm{Aut}\left( \mathbb{A}_{\ast }^{2}\right) $
consisting of automorphisms of the form $\left( x,y\right) \mapsto \left(
x,y+p\left( x\right) \right) $ where $p\left( x\right) \in x^{2}\mathbb{C}%
\left[ x\right] $. Since the representation of $\mathbb{G}_{m}$ on $H^{1}(%
\mathbb{A}_{\ast }^{2},\mathcal{O}_{\mathbb{A}_{\ast }^{2}})$ induced by the
action $\sigma :\mathbb{G}_{m}\times \mathbb{A}_{\ast }^{2}\rightarrow 
\mathbb{A}_{\ast }^{2}$ commutes with that of $\mathrm{GL}_{2}\left( \mathbb{%
C}\right) $, the decomposition 
\begin{equation*}
H^{1}(\mathbb{A}_{\ast }^{2},\mathcal{O}_{\mathbb{A}_{\ast }^{2}})\simeq
\bigoplus_{d\geq 2}H^{1}(\mathbb{P}^{1},\mathcal{O}_{\mathbb{P}^{1}}\left(
-d\right) )
\end{equation*}%
provides a splitting of the induced representation $\mathrm{GL}_{2}\left( 
\mathbb{C}\right) \rightarrow \mathrm{GL}(H^{1}(\mathbb{A}_{\ast }^{2},%
\mathcal{O}_{\mathbb{A}_{\ast }^{2}}))$ into a direct sum of representations
on the finite dimensional vector spaces $H^{1}(\mathbb{P}^{1},\mathcal{O}_{%
\mathbb{P}^{1}}\left( -d\right) )$, $d\geq 2$. Using the identifications $%
H^{1}(\mathbb{P}^{1},\mathcal{O}_{\mathbb{P}^{1}}\left( -d\right) )\simeq
W_{-d}$ and $H^{0}(\mathbb{P}^{1},\mathcal{O}_{\mathbb{P}^{1}}\left(
d-2\right) )\simeq V_{d-2}$ as in example \ref{exa:Equiv-Can-cover}, the
perfect pairing $W_{-d}\times V_{d-2}\rightarrow W_{-2}$ given by Serre
duality for $\mathbb{P}^{1}$ yields an isomorphism of representations $%
W_{-d}\simeq V_{d-2}^{\ast }\otimes W_{-2},$ where $\mathrm{GL}_{2}\left( 
\mathbb{C}\right) $ acts on the vector space $V_{d-2}$ of binary forms of
degree $d-2$ via the standard representation and on $W_{-2}\simeq H^{1}(%
\mathbb{P}^{1},\mathcal{O}_{\mathbb{P}^{1}}\left( -2\right) )\simeq \mathbb{C%
}$ by the inverse of the determinant.
\end{parn}

\begin{parn}
Since triangular automorphisms in $U$ do not preserve the usual degree on $%
\mathbb{C}\left[x,y\right]$, the induced representation $U\rightarrow\mathrm{%
GL(H^{1}(\mathbb{A}_{*}^{2},\mathcal{O}_{\mathbb{A}_{*}^{2}}))}$ does not
preserve the above decomposition of $H^{1}(\mathbb{A}_{*}^{2},\mathcal{O}_{%
\mathbb{A}_{*}^{2}})$ into a direct sum. However, letting $%
F_{-d}=\bigoplus_{i=2}^{d}W_{-i}\subset H^{1}(\mathbb{A}_{*}^{2},\mathcal{O}%
_{\mathbb{A}_{*}^{2}})$, $d\geq2$, we have the following description.
\end{parn}

\begin{lem}
\label{lem:Unip-Rep} For every $d\geq2$, the subspace $F_{-d}$ is $U$-stable
and the quotient representation on $F_{-\left(d+1\right)}/F_{-d}\simeq
W_{-\left(d+1\right)}$ is the trivial one.
\end{lem}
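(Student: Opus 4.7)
My plan is to compute the action of each element of $U$ directly on the explicit basis of $H^{1}(\mathbb{A}_{*}^{2},\mathcal{O}_{\mathbb{A}_{*}^{2}})$ coming from its identification with the quotient ring $\mathbb{C}[x^{\pm1},y^{\pm1}]/(\mathbb{C}[x^{\pm1},y]+\mathbb{C}[x,y^{\pm1}])$, in which the classes of the monomials $x^{-m}y^{-n}$, $m,n\geq 1$, form a basis and $W_{-d}$ is spanned by those with $m+n=d$. Every element of $U$ has the form $\psi_{p}\colon(x,y)\mapsto(x,y+p(x))$ with $p\in x^{2}\mathbb{C}[x]$, and the substitution $y\mapsto y+p(x)$ implements $\psi_{p}^{*}$ on representing cocycles.

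The key step is the identity, as classes in $H^{1}$,
\begin{equation*}
\psi_{p}^{*}(x^{-m}y^{-n})\;=\;x^{-m}(y+p(x))^{-n}\;=\;\sum_{k\geq 0}\binom{-n}{k}\,x^{-m}\,p(x)^{k}\,y^{-n-k},
\end{equation*}
the right hand side being the formal $y^{-1}$-adic expansion. This is to be justified through the identification $H^{1}(\mathbb{A}_{*}^{2},\mathcal{O}_{\mathbb{A}_{*}^{2}})\simeq H^{2}_{\{0\}}(\mathbb{A}^{2},\mathcal{O}_{\mathbb{A}^{2}})$, on which $\psi_{p}$ acts naturally since it fixes the origin, or equivalently by refining the \v{C}ech covering $\mathcal{U}_{0}$ with the open subset $\{y+p(x)\neq 0\}$ and tracking the pulled-back cocycle through the refinement. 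I expect this foundational bookkeeping, together with the observation that only finitely many terms in the formal series contribute modulo the coboundary subspace, to be the main technical obstacle; what remains is a degree count.

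Since $p\in x^{2}\mathbb{C}[x]$, we have $p(x)^{k}\in x^{2k}\mathbb{C}[x]$; writing $p(x)^{k}=\sum_{l\geq 2k}c_{k,l}\,x^{l}$, only monomials $x^{a}y^{b}$ with both $a,b<0$ survive in the quotient, so the surviving summands are those with $2k\leq l<m$. Each such summand contributes the basis element $x^{l-m}y^{-n-k}\in W_{-d'}$ with
\begin{equation*}
d'=(m-l)+(n+k)=(m+n)-(l-k).
\end{equation*}
The $k=0$ term yields only $x^{-m}y^{-n}\in W_{-d}$ itself. For every $k\geq 1$, the constraint $l\geq 2k$ forces $l-k\geq k\geq 1$, hence $d'\leq d-1$.

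Therefore $\psi_{p}^{*}(x^{-m}y^{-n})\equiv x^{-m}y^{-n}\pmod{F_{-(d-1)}}$, and in particular $\psi_{p}^{*}(W_{-d})\subset F_{-d}$. Summing over the weight components $W_{-i}$ with $i\leq d$ gives $\psi_{p}^{*}(F_{-d})\subset F_{-d}$, so $F_{-d}$ is $U$-stable. Applying the same congruence with $d+1$ in place of $d$ shows that $\psi_{p}^{*}$ induces the identity on $F_{-(d+1)}/F_{-d}\simeq W_{-(d+1)}$, which is exactly the triviality of the quotient representation asserted by the lemma.
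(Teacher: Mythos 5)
Your argument is correct and proves the lemma, but it runs in the opposite direction from the paper's proof, so it is worth comparing the two. You work directly inside $H^{1}(\mathbb{A}_{*}^{2},\mathcal{O}_{\mathbb{A}_{*}^{2}})\simeq\mathbb{C}[x^{\pm1},y^{\pm1}]/\langle\mathbb{C}[x^{\pm1},y]+\mathbb{C}[x,y^{\pm1}]\rangle$ and expand $x^{-m}(y+p(x))^{-n}$ as a $y^{-1}$-adic series. The step you flag as the technical core is indeed the only delicate point, and it does go through as you indicate: refining $\mathcal{U}_{0}$ to $\{U_{x},\,U_{y}\cap\{y+p(x)\neq0\}\}$ (a covering of $\mathbb{A}^2_*$ because $p\in x^{2}\mathbb{C}[x]$ vanishes on $\{x=0\}$), one checks that the tail $(y+p)^{-n}-\sum_{k\leq N}\binom{-n}{k}p^{k}y^{-n-k}$ equals $p^{N+1}$ times an element of $\mathbb{C}[x,y^{\pm1},(y+p)^{-1}]$, so $x^{-m}$ times the tail is a coboundary as soon as $2(N+1)\geq m$; your finiteness claim and the subsequent degree count ($l\geq 2k$ forces $l-k\geq k\geq 1$ when $k\geq1$, so every correction term lies in some $W_{-d'}$ with $2\le d'\leq d-1$) are correct. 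The paper sidesteps exactly this bookkeeping by dualizing: by Serre duality $\left(H^{1}(\mathbb{A}_{*}^{2},\mathcal{O}_{\mathbb{A}_{*}^{2}})\right)^{*}\simeq\prod_{i\geq2}V_{i-2}$ is a product of spaces of binary forms on which $U$ acts by genuine polynomial substitution, where the observation that a triangular substitution $u=(x,y+x^{2}s(x))$ sends a form of degree $n$ to itself plus terms of strictly higher degree is immediate; the lemma then follows by passing to annihilators, $Q_{-d}^{*}=F_{-d}^{\perp}=\prod_{i>d}V_{i-2}$. What your route buys is the sharper, explicit congruence $\psi_{p}^{*}(x^{-m}y^{-n})\equiv x^{-m}y^{-n}\pmod{F_{-(d-1)}}$ on cocycle representatives themselves, which exhibits the unipotence of the $U$-action concretely; what the paper's route buys is that no expansion of negative powers, no refinement of coverings, and no finiteness argument are needed. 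If you keep your version, you should write out the refinement argument rather than leave it as expected bookkeeping, since it is the entire content of the proof once the degree count is done.
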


\begin{proof}
It is equivalent to show that the subspace $Q_{-d}^{\ast }$ of the dual of $%
H^{1}(\mathbb{A}_{\ast }^{2},\mathcal{O}_{\mathbb{A}_{\ast }^{2}})$ that is
orthogonal to $F_{-d}$ is stable under the dual representation, and that the
quotient representation on $Q_{-d}^{\ast }/Q_{-\left( d+1\right) }^{\ast }$
is the trivial one. By Serre duality again, we have 
\begin{equation*}
\left( H^{1}(\mathbb{A}_{\ast }^{2},\mathcal{O}_{\mathbb{A}_{\ast
}^{2}})\right) ^{\ast }\simeq (\bigoplus_{i\geq 2}W_{-i})^{\ast }\simeq
\prod_{i\geq 2}W_{-i}^{\ast }\simeq \prod_{i\geq 2}V_{i-2},
\end{equation*}%
the dual representation on $\prod_{i\geq 2}V_{i-2}$ being induced by the
action of $U$ on $\mathbb{C}\left[ x,y\right] $ defined by $u\cdot p\left(
x,y\right) =p\left( u^{-1}\left( x,y\right) \right) $. For an element $%
u=\left( x,y+x^{2}s\left( x\right) \right) \in U$ and a homogeneous
polynomial $p_{n}\left( x,y\right) \in V_{n}$ of degree $n\geq 0$, one has $%
u\cdot p_{n}\left( x,y\right) =p_{n}\left( x,y\right) +R\left( x,y\right) $,
where $R$ is a finite sum of homogeneous polynomials of degrees $>n$. This
implies that 
\begin{equation*}
Q_{-d}^{\ast }=\prod_{i>d}V_{i-2}\subset \prod_{i\geq 2}V_{i-2}
\end{equation*}%
is $U$-stable and that the quotient representation on $Q_{-d}^{\ast
}/Q_{-\left( d+1\right) }^{\ast }\simeq V_{d-2}$ is the trivial one, as
desired.
\end{proof}

\begin{parn}
One cannot expect to have a general effective criterion to decide which
isomorphy classes of $\mathbb{G}_{a}$-bundles or $\mathbb{A}^{1}$-bundles
over $\mathbb{A}_{\ast }^{2}$ belong to the same orbit of the actions of $%
\mathrm{Aut}\left( \mathbb{A}_{\ast }^{2}\right) $ on $H^{1}(\mathbb{A}%
_{\ast }^{2},\mathcal{O}_{\mathbb{A}_{\ast }^{2}})$ and $\mathbb{P}H^{1}(%
\mathbb{A}_{\ast }^{2},\mathcal{O}_{\mathbb{A}_{\ast }^{2}})$ respectively.
But the above description provides at least strong restrictions for certain
homogeneous $\mathbb{G}_{a}$-bundles to be obtained as pull-backs of other
ones by an automorphism of $\mathbb{A}_{\ast }^{2}$. For instance, the
isomorphy class of the $\mathbb{A}^{1}$-bundle $\mathrm{pr}_{x,y}:\mathrm{SL}%
_{2}\left( \mathbb{C}\right) =\left\{ xv-yu=1\right\} \rightarrow \mathbb{A}%
_{\ast }^{2}$ is a fixed point of the projective representation of $\mathrm{%
Aut}\left( \mathbb{A}_{\ast }^{2}\right) $ on $\mathbb{P}H^{1}(\mathbb{A}%
_{\ast }^{2},\mathcal{O}_{\mathbb{A}_{\ast }^{2}})$, whence is stable under
arbitrary base change by an automorphism of $\mathbb{A}_{\ast }^{2}$. In the
same spirit, for the isomorphy classes of the homogeneous $\mathbb{G}_{a}$%
-bundles 
\begin{equation*}
\mathrm{pr}_{x,y}:X_{m,n}=X\left( m,n,1\right) =\left\{
x^{m}v-y^{n}u=1\right\} \rightarrow \mathbb{A}_{\ast }^{2},
\end{equation*}%
we have following result (compare with Theorem \ref{thm:HomBunIsoType} and
example \ref{exa:HomBundExplIso} below).
\end{parn}

\begin{prop}
The $\mathbb{A}^{1}$-bundles $X_{m,n}\rightarrow\mathbb{A}_{*}^{2}$ and $%
X_{p,q}\rightarrow\mathbb{A}_{*}^{2}$ can be obtained from each other by a
base change $\psi:\mathbb{A}_{*}^{2}\overset{\sim}{\rightarrow}\mathbb{A}%
_{*}^{2}$ if and only if $\left\{ m,n\right\} =\left\{ p,q\right\} $.
\end{prop}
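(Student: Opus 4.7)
The plan is to prove the nontrivial ``only if'' direction by extracting two invariants of the isomorphy class $[X_{m,n}]=[x^{-m}y^{-n}] \in \mathbb{P}H^{1}(\mathbb{A}_{*}^{2},\mathcal{O}_{\mathbb{A}_{*}^{2}})$ under the action of $\mathrm{Aut}(\mathbb{A}_{*}^{2})$. The ``if'' direction is immediate: the identity handles $(p,q)=(m,n)$, and the swap $\psi(x,y)=(y,x)$ carries the defining equation of $X_{m,n}$ to that of $X_{n,m}$.

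The first invariant is the minimal $d$ with $[X_{m,n}] \in \mathbb{P}F_{-d}$. By Jung's theorem (\ref{par:GL2-rep}), $\mathrm{Aut}(\mathbb{A}_{*}^{2})$ is generated by $\mathrm{GL}_{2}(\mathbb{C})$ and $U$; the former preserves each graded piece $W_{-d}$, and the latter preserves each $F_{-d}$ by Lemma \ref{lem:Unip-Rep}. Hence $\psi^{*}$ preserves the filtration $F_{-\bullet}$. Since $[x^{-m}y^{-n}]$ lies in $F_{-(m+n)}\setminus F_{-(m+n-1)}$ and similarly for $(p,q)$, the assumption $\psi^{*}[X_{m,n}] = \mu[X_{p,q}]$ in $\mathbb{P}H^{1}$ forces $m+n=p+q=:d$.

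The second invariant is the leading image of $[X_{m,n}]$ in the quotient $F_{-d}/F_{-(d-1)}\simeq W_{-d}$, considered modulo the induced $\mathrm{GL}_{2}(\mathbb{C})$-action. Indeed, Lemma \ref{lem:Unip-Rep} shows that $U$ acts trivially on $F_{-d}/F_{-(d-1)}$, so the induced action of $\mathrm{Aut}(\mathbb{A}_{*}^{2})$ on this quotient factors through the standard action of $\mathrm{GL}_{2}(\mathbb{C})$ on $W_{-d}$. Under the Serre-duality isomorphism $W_{-d}\simeq V_{d-2}^{*}\otimes W_{-2}$ recalled in \ref{par:GL2-rep}, the basis element $x^{-m}y^{-n}\in\mathcal{B}_{d}$ is dual (up to a fixed generator of $W_{-2}$) to the monomial $x^{m-1}y^{n-1}\in V_{d-2}$. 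Projecting the hypothesis $\psi^{*}[x^{-m}y^{-n}]=\mu[x^{-p}y^{-q}]$ into $W_{-d}$ therefore produces some $g\in\mathrm{GL}_{2}(\mathbb{C})$ whose action on $V_{d-2}$ sends $x^{m-1}y^{n-1}$ to a nonzero scalar multiple of $x^{p-1}y^{q-1}$.

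The conclusion is then an elementary unique-factorization argument in $\mathbb{C}[x,y]$: a binary form of the shape $(\alpha x+\beta y)^{m-1}(\gamma x+\delta y)^{n-1}$ can equal a scalar multiple of $x^{p-1}y^{q-1}$ only when its unordered multiset of irreducible linear factors, with multiplicities, coincides with that of $x^{p-1}y^{q-1}$, forcing $\{m-1,n-1\}=\{p-1,q-1\}$ and hence $\{m,n\}=\{p,q\}$. The main step demanding care is the identification of the induced $\mathrm{Aut}(\mathbb{A}_{*}^{2})$-action on $W_{-d}$ with the standard binary-form representation of $\mathrm{GL}_{2}(\mathbb{C})$ through Serre duality; once that identification is in place the rest is bookkeeping, and the edge case $d=2$ is trivial since then $m=n=p=q=1$.
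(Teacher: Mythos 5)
Your proof is correct and follows essentially the same route as the paper's: reduce to the orbit question for $\mathrm{Aut}(\mathbb{A}_{*}^{2})$ acting on $H^{1}(\mathbb{A}_{*}^{2},\mathcal{O}_{\mathbb{A}_{*}^{2}})$, use the $\mathrm{GL}_{2}$-grading together with Lemma \ref{lem:Unip-Rep} to force $m+n=p+q$ and to pass to the $\mathrm{GL}_{2}$-action on $W_{-d}$, then apply Serre duality and unique factorization of binary forms. Your write-up merely makes explicit a few steps the paper leaves implicit (the filtration invariance, the trivial ``if'' direction, and the factorization argument).
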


\begin{proof}
It is equivalent to show that for $a\in \mathbb{C}^{\ast }$ the isomorphy
classes in $H^{1}(\mathbb{A}_{\ast }^{2},\mathcal{O}_{\mathbb{A}_{\ast
}^{2}})$ of $X_{p,q}$ and $X_{m,n}\left( a\right) =X\left( m,n,a\right) $
belong to the same orbit of the action of $\mathrm{Aut}\left( \mathbb{A}%
_{\ast }^{2}\right) $ if and only if $\left\{ m,n\right\} =\left\{
p,q\right\} $. Since $X_{m,n}\left( a\right) $ and $X_{p,q}$ are homogeneous
of degree $-m-n$ and $-p-q$, it follows from Proposition \ref{pro:HomBunChar}
and Lemma \ref{lem:Unip-Rep} that their isomorphy classes $[X_{m,n}(a)]\in
W_{-\left( m+n\right) }$ and $[X_{p,q}]\in W_{-\left( p+q\right) }$ belong
to the same orbit of $\mathrm{Aut}\left( \mathbb{A}_{\ast }^{2}\right) $ if
and only $m+n=p+q=d$ and they belong to the same orbit of the action of $%
\mathrm{GL}_{2}\left( \mathbb{C}\right) $ on $W_{-d}$. By duality, this
holds if and only if the homogeneous polynomials $a^{-1}x^{m-1}y^{n-1}$ and $%
x^{p-1}y^{q-1}$ belong to the same orbit of the standard representation of $%
\mathrm{GL}_{2}\left( \mathbb{C}\right) $ on $V_{d-2}$, which is the case if
and only if $\left\{ m-1,n-1\right\} =\left\{ p-1,q-1\right\} $.
\end{proof}

\subsection{Isomorphy types of homogeneous $\mathbb{G}_{a}$-bundles}

\indent\newline
\noindent Recall \ref{sub:HomBunMain} that a nontrivial $\mathbb{G}_{a}$%
-bundle $\rho:X\rightarrow\mathbb{A}_{*}^{2}$ is called $-d$-homogeneous if
it represents a cohomology class in $H^{1}(\mathbb{P}^{1},\mathcal{O}_{%
\mathbb{P}^{1}}\left(-d\right))\subset H^{1}(\mathbb{A}_{*}^{2},\mathcal{O}_{%
\mathbb{A}_{*}^{2}})$ for a certain $d\geq2$. All these bundles come
equipped with a lift of the $\mathbb{G}_{m}$-action $\sigma:\mathbb{G}%
_{m}\times\mathbb{A}_{*}^{2}\rightarrow\mathbb{A}_{*}^{2}$ as in Proposition %
\ref{pro:HomBunChar}, and we have the following characterization:

\begin{thm}
\label{thm:HomBunIsoType} The total spaces of two nontrivial homogeneous $%
\mathbb{G}_{a}$-bundles are $\mathbb{G}_{m}$-equivariantly isomorphic if and
only if they have the same homogeneous degree. In particular, for a fixed $%
d\geq2,$ the total spaces of nontrivial $-d$-homogeneous $\mathbb{G}_{a}$%
-bundles $\rho:X\rightarrow\mathbb{A}_{*}^{2}$ are all isomorphic as
abstract affine varieties.
\end{thm}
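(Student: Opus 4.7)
The plan is to descend the classification to principal $\mathcal{O}_{\mathbb{P}^{1}}(-d)$-bundles on $\mathbb{P}^{1}$ via the correspondence recalled in \ref{par:HomB-Princ}, and there apply a refined version of the Danilov-Gizatullin isomorphy theorem established in the appendix. The starting observation is that a $-d$-homogeneous $\mathbb{G}_{a}$-bundle $\tilde{\rho}:\tilde{X}\to\mathbb{A}_{*}^{2}$ with its prescribed $\mathbb{G}_{m}$-lift is the pullback of the principal $\mathcal{O}_{\mathbb{P}^{1}}(-d)$-bundle $\nu:Y:=\tilde{X}/\mathbb{G}_{m}\to\mathbb{P}^{1}$ along the $\mathbb{G}_{m}$-torsor $\pi:\mathbb{A}_{*}^{2}\to\mathbb{P}^{1}$. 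Any $\mathbb{G}_{m}$-equivariant isomorphism $\tilde{X}\simeq\tilde{X}'$ descends to an isomorphism $\phi:Y\simeq Y'$ covering some $\bar{\psi}\in\mathrm{Aut}(\mathbb{P}^{1})$; conversely, any such $\phi$ pulls back along $\pi$ to a $\mathbb{G}_{m}$-equivariant isomorphism of total spaces. The theorem thus reduces to the assertion that two such quotient surfaces are isomorphic over $\mathbb{P}^{1}$ up to $\mathrm{Aut}(\mathbb{P}^{1})$ if and only if their degrees coincide.

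For the sufficiency, I would complete each $Y$ fiberwise by a section at infinity $S_{\infty}:=\bar{Y}\setminus Y$, producing a $\mathbb{P}^{1}$-bundle $\bar{Y}\to\mathbb{P}^{1}$ in which $S_{\infty}$ is an ample section of self-intersection $d$. The refined Danilov-Gizatullin isomorphy theorem of \S3 then asserts that the complement of an ample section of fixed self-intersection in a Hirzebruch surface is uniquely determined up to isomorphism compatible with the $\mathbb{P}^{1}$-ruling, modulo $\mathrm{Aut}(\mathbb{P}^{1})$. Applied to $\bar{Y}\setminus S_{\infty}\simeq Y$ and $\bar{Y}'\setminus S_{\infty}'\simeq Y'$ this produces the required $\phi$, and pulling back along $\pi$ delivers the $\mathbb{G}_{m}$-equivariant isomorphism $\tilde{X}\simeq\tilde{X}'$.

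For the necessity, the degree $d$ must be recovered as a $\mathbb{G}_{m}$-equivariant invariant of $\tilde{X}$. For $d\geq 2$, the $\mathbb{A}^{1}$-fibration $\nu:Y\to\mathbb{P}^{1}$ is the unique such fibration on $Y$ up to automorphism of the base, and $d$ is then the self-intersection of the section at infinity in any fiberwise completion; alternatively, $d$ can be read off directly from the $\mathbb{G}_{m}$-weight decomposition of $\Gamma(\tilde{X},\mathcal{O}_{\tilde{X}})$, which is manifestly preserved by any $\mathbb{G}_{m}$-equivariant isomorphism. The final clause of the theorem on abstract algebraic isomorphism is then immediate from the $\mathbb{G}_{m}$-equivariant one.

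The main obstacle is precisely the compatibility with the $\mathbb{P}^{1}$-ruling in the refined Danilov-Gizatullin statement: the classical form only produces an abstract surface isomorphism of complements, whereas here the isomorphism must respect the rulings (up to $\mathrm{Aut}(\mathbb{P}^{1})$) in order to lift along $\pi$ to an equivariant isomorphism between the $\tilde{X}$'s. Establishing this ruling-compatible refinement of Danilov-Gizatullin is the technical core of the proof, and is the reason for the detour through the appendix.
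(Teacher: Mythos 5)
There is a genuine gap, and it sits exactly where you locate the ``technical core'': the reduction to an isomorphism of the quotient surfaces that is \emph{compatible with the rulings up to} $\mathrm{Aut}(\mathbb{P}^{1})$ is wrong in both directions. First, a $\mathbb{G}_{m}$-equivariant isomorphism $\tilde{X}\simeq\tilde{X}'$ descends only to an \emph{abstract} isomorphism $Y=\tilde{X}/\mathbb{G}_{m}\simeq Y'=\tilde{X}'/\mathbb{G}_{m}$; nothing forces it to send fibers of $\nu$ to fibers of $\nu'$, so it need not cover any $\bar{\psi}\in\mathrm{Aut}(\mathbb{P}^{1})$. Second, and fatally for your sufficiency argument, the reduced statement you want is false: nontrivial $\mathcal{O}_{\mathbb{P}^{1}}(-d)$-bundles are classified by the nonzero elements of the $(d-1)$-dimensional space $H^{1}(\mathbb{P}^{1},\mathcal{O}_{\mathbb{P}^{1}}(-d))\simeq V_{d-2}^{\ast}\otimes W_{-2}$, on which a ruling-compatible isomorphism modulo $\mathrm{Aut}(\mathbb{P}^{1})$ can only act through $\mathrm{GL}_{2}(\mathbb{C})$ and scaling. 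For $d=4$ the quotients of $X_{2,2}$ and $X_{3,1}$ correspond to the binary forms $xy$ and $x^{2}$, which lie in different $\mathrm{GL}_{2}(\mathbb{C})$-orbits, so these two $\mathcal{O}_{\mathbb{P}^{1}}(-4)$-bundles are \emph{not} isomorphic over $\mathbb{P}^{1}$ even up to $\mathrm{Aut}(\mathbb{P}^{1})$ --- this is precisely the content of the Proposition on base change in \S 2.1 --- yet the theorem asserts (and Example \ref{exa:HomBundExplIso} illustrates) that $X_{2,2}\simeq X_{3,1}$. So the ``ruling-compatible refinement of Danilov--Gizatullin'' you propose to prove cannot hold, and it is not what Theorem \ref{thm:DanGizIso} says: that refinement only normalizes the induced map on Picard groups, namely $f^{\ast}(\nu^{\ast}\mathcal{O}_{\mathbb{P}^{1}}(1))\simeq(\nu')^{\ast}\mathcal{O}_{\mathbb{P}^{1}}(1)$, and the isomorphism it produces (built from an $\mathbb{A}^{1}$-fibration $\theta:Y\rightarrow\mathbb{A}^{1}$ over the \emph{affine} line) genuinely scrambles the $\mathbb{P}^{1}$-rulings.

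The mechanism that actually makes the proof work is to reconstruct $\tilde{X}$ not as $\pi^{\ast}Y$ but as the $\mathbb{G}_{m}$-torsor $\tilde{X}\rightarrow Y$, whose class in $\mathrm{Pic}(Y)\simeq\mathbb{Z}$ is the generator $\overline{\rho}^{\ast}\mathcal{O}_{\mathbb{P}^{1}}(-1)$. An arbitrary abstract isomorphism $f:Y'\rightarrow Y$ --- which the classical Danilov--Gizatullin theorem supplies once $d=d'$ --- pulls this torsor back to one whose class is a generator of $\mathrm{Pic}(Y')$, hence equal to $\pm[\tilde{X}'\rightarrow Y']$; Theorem \ref{thm:DanGizIso} is needed only to fix this sign, after which $\tilde{X}'\simeq\tilde{X}\times_{Y}Y'\simeq\tilde{X}$ equivariantly. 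This is why no compatibility with the rulings is required. For the necessity direction, your appeal to uniqueness of the $\mathbb{A}^{1}$-fibration on $Y$ is also doubtful (these surfaces carry many $\mathbb{A}^{1}$-fibrations, e.g.\ the one over $\mathbb{A}^{1}$ used in the appendix), and the remark about the weight decomposition of $\Gamma(\tilde{X},\mathcal{O}_{\tilde{X}})$ is not carried out; the clean invariant is $\omega_{Y}\simeq\nu^{\ast}\mathcal{O}_{\mathbb{P}^{1}}(d-2)$ read against the generator of $\mathrm{Pic}(Y)\simeq\mathbb{Z}$, which any isomorphism $f$ must preserve up to sign.
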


\begin{proof}
Suppose that $\rho :X\rightarrow \mathbb{A}_{\ast }^{2}$ and $\rho ^{\prime
}:X^{\prime }\rightarrow \mathbb{A}_{\ast }^{2}$ are homogeneous of degrees $%
-d$ and $-d^{\prime }$ respectively. A $\mathbb{G}_{m}$-equivariant
isomorphism between $X^{\prime }$ and $X$ descends to an isomorphism $%
f:X^{\prime }/\mathbb{G}_{m}\overset{\sim }{\rightarrow }X/\mathbb{G}_{m}$
between the total space of the principal $\mathcal{O}_{\mathbb{P}^{1}}\left(
-d\right) $-bundle $\overline{\rho }:X/\mathbb{G}_{m}\rightarrow \mathbb{P}%
^{1}$ and that of the principal $\mathcal{O}_{\mathbb{P}^{1}}\left(
-d^{\prime }\right) $-bundle $\overline{\rho }^{\prime }:X^{\prime }/\mathbb{%
G}_{m}\rightarrow \mathbb{P}^{1}$ (see \ref{par:HomB-Princ}). The Picard
group of a principal $\mathcal{O}_{\mathbb{P}^{1}}\left( -i\right) $-bundle $%
\nu :Y\rightarrow \mathbb{P}^{1}$ is isomorphic to $\mathbb{Z}$, generated
for instance by the pull-back $\nu ^{\ast }\mathcal{O}_{\mathbb{P}%
^{1}}\left( -1\right) $ of $\mathcal{O}_{\mathbb{P}^{1}}\left( -1\right) $.
Furthermore it follows from the exact sequence 
\begin{equation*}
0\rightarrow \nu ^{\ast }\Omega _{\mathbb{P}^{1}}^{1}\rightarrow \Omega
_{Y}^{1}\rightarrow \Omega _{Y/\mathbb{P}^{1}}^{1}\simeq \nu ^{\ast }%
\mathcal{O}_{\mathbb{P}^{1}}\left( i\right) \rightarrow 0
\end{equation*}%
that $\omega _{Y}=\Lambda ^{2}\Omega _{Y}^{1}\simeq \nu ^{\ast }\mathcal{O}_{%
\mathbb{P}^{1}}\left( i-2\right) $. Since the isomorphism $f^{\ast }:\mathrm{%
Pic}(X/\mathbb{G}_{m})\overset{\sim }{\rightarrow }\mathrm{Pic}(X^{\prime }/%
\mathbb{G}_{m})$ induced by $f$ sends $\omega _{X/\mathbb{G}_{m}}$ to $%
\omega _{X^{\prime }/\mathbb{G}_{m}}$, we conclude that $d=d^{\prime }$
necessarily. Suppose conversely that $\rho :X\rightarrow \mathbb{A}_{\ast
}^{2}$ and $\rho ^{\prime }:X^{\prime }\rightarrow \mathbb{A}_{\ast }^{2}$
are homogeneous of the same degree $-d\leq -2$. The existence of a $\mathbb{G}%
_{m}$-equivariant isomorphism between $X^{\prime }$ and $X$ is equivalent to
the existence of an isomorphism $f:X^{\prime }/\mathbb{G}_{m}\overset{\sim }{%
\rightarrow }X/\mathbb{G}_{m}$ for which $\pi ^{\prime }:X^{\prime
}\rightarrow X^{\prime }/\mathbb{G}_{m}$ and $\mathrm{p}_{2}:X\times _{X/%
\mathbb{G}_{m}}X^{\prime }/\mathbb{G}_{m}\rightarrow X^{\prime }/\mathbb{G}%
_{m}$ are isomorphic as $\mathbb{G}_{m}$-bundles over $X^{\prime }/\mathbb{G}%
_{m}$. Since the diagram 
\begin{equation*}
\xymatrix{ X \ar[d]_{\rho} \ar[r] & X/\mathbb{G}_m \ar[d]^{\overline{\rho}}
\\ \mathbb{A}^2_* \ar[r] & \mathbb{P}^1=\mathbb{A}^2_*/\mathbb{G}_m,}
\end{equation*}%
is Cartesian, the isomorphy class of the $\mathbb{G}_{m}$-bundle $%
X\rightarrow X/\mathbb{G}_{m}$ in $H^{1}(X/\mathbb{G}_{m},\mathbb{G}%
_{m})\simeq \mathrm{Pic}(X/\mathbb{G}_{m})$ coincides with $\overline{\rho }%
^{\ast }\mathcal{O}_{\mathbb{P}^{1}}\left( -1\right) $, and similarly for $%
X^{\prime }\rightarrow X^{\prime }/\mathbb{G}_{m}$. So the $\mathbb{G}_{m}$%
-equivariant isomorphy of $X$ and $X^{\prime }$ reduces to the existence of
an isomorphism $f:X^{\prime }/\mathbb{G}_{m}\overset{\sim }{\rightarrow }X/%
\mathbb{G}_{m}$ such that $f^{\ast }\overline{\rho }^{\ast }\mathcal{O}_{%
\mathbb{P}^{1}}\left( -1\right) =\overline{\rho ^{\prime }}^{\ast }\mathcal{O%
}_{\mathbb{P}^{1}}\left( -1\right) $. Since $\overline{\rho }:X/\mathbb{G}%
_{m}\rightarrow \mathbb{P}^{1}$ and $\overline{\rho }^{\prime }:X^{\prime }/%
\mathbb{G}_{m}\rightarrow \mathbb{P}^{1}$ are both nontrivial $\mathcal{O}_{%
\mathbb{P}^{1}}\left( -d\right) $-bundles, the Danilov-Gizatullin Theorem 
\cite[Theorem 5.8.1 and Remark 4.8.6]{Gizatullin1977} implies directly the
existence of an isomorphism $f:X^{\prime }/\mathbb{G}_{m}\overset{\sim }{%
\rightarrow }X/\mathbb{G}_{m}$. Moreover, since $f^{\ast }$ maps generators
of $\mathrm{Pic}(X/\mathbb{G}_{m})$ to generators of $\mathrm{Pic}(X^{\prime
}/\mathbb{G}_{m})$, it follows that $X^{\prime }$ and $X\times _{X/\mathbb{G}%
_{m}}X^{\prime }/\mathbb{G}_{m}$ are isomorphic as locally trivial $\mathbb{A%
}_{\ast }^{1}$-bundles over $X^{\prime }/\mathbb{G}_{m}$, which implies the
second assertion of the theorem. Finally, the existence of an isomorphism $f$
with the required additional property is guaranteed by Theorem \ref%
{thm:DanGizIso} in the Appendix.
\end{proof}

\begin{example}
\label{exa:HomBundExplIso} As a consequence of Theorem \ref%
{thm:HomBunIsoType} above, we get in particular that if $m+n=m^{\prime
}+n^{\prime }$, then the varieties $X_{m,n}$ and $X_{m^{\prime },n^{\prime
}} $ are isomorphic. While it appears to be rather difficult to construct an
explicit isomorphism between even the first interesting examples $X_{2,2}$
and $X_{3,1}$, one can check that the morphism 
\begin{equation*}
\pi:X_{2,2}=\left\{ x^{2}v-y^{2}u=1\right\} \rightarrow\mathbb{A}_{*}^{2}=%
\mathrm{Spec}\left(\mathbb{C}\left[a,b\right]\right)\setminus\left\{
\left(0,0\right)\right\} ,\;\left(x,y,u,v\right)\mapsto(x-\frac{1}{2}y,\frac{%
6x-y}{8}v-\frac{3y-2x}{2}u)
\end{equation*}
is $\mathbb{A}^{1}$-bundle isomorphic to the one $\rho:X_{3,1}\simeq\left\{
a^{3}v^{\prime }-bu^{\prime }=1\right\} \rightarrow\mathbb{A}_{*}^{2}$. More
precisely, letting 
\begin{equation*}
w=\frac{5}{16}v^{2}x+u^{2}x+\frac{5}{2}vux-\frac{1}{32}v^{2}y-\frac{5}{2}%
u^{2}y-\frac{5}{4}vuy\in\Gamma(X_{2,2},\mathcal{O}_{X_{2,2}}),
\end{equation*}
a direct computation shows that $\pi^{-1}(U_{a})\simeq U_{a}\times\mathrm{%
Spec}(\mathbb{C}\left[a^{-3}(y+a+ab)\right])$, $\pi^{-1}\left(U_{b}\right)%
\simeq U_{b}\times\mathrm{Spec}(\mathbb{C}\left[b^{-1}w\right])$ and that $%
a^{-3}\left(y+a+ab\right)-b^{-1}w=a^{-3}b^{-1}\in\Gamma(\pi^{-1}(U_{a}\cap
U_{b}),\mathcal{O}_{X_{2,2}})$. Thus $\pi:X_{2,2}\rightarrow\mathbb{A}%
_{*}^{2}$ is an $\mathbb{A}^{1}$-bundle with the same associated \v{C}ech
cocycle $a^{-3}b^{-1}\in C^{1}(\{U_{a},U_{b}\},\mathcal{O}_{\mathbb{A}%
_{*}^{2}})$ as $\rho:X_{3,1}\rightarrow\mathbb{A}_{*}^{2}$.
\end{example}

\subsection{Existence of exotic affine spheres}
\indent\newline\noindent Here we show that exotic affine spheres occur 
among the total spaces of non trivial $\mathbb{A}^{1}$-bundles over $\mathbb{A}
_{*}^{2}$. 
\begin{parn} To illustrate the idea behind the proof of Theorem \ref{thm:DeRhamArg} below, let us
first consider the varieties $X_{m,1}=\{x^mv-yu=1\}$, $m\geq 1$. Because $X_{m,1}$ is smooth, 
the canonical sheaf $\omega_{m}=\omega_{X_{m,1}}$ of $X_{m,1}$ is a free $\mathcal{O}_{X_{m,1}}$-module 
generated for instance by the global nowhere vanishing 3-form  
\begin{equation*}
\alpha _{m} = x^{-m}dx\wedge dy\wedge du\mid_{X_{m,1}}=-y^{-1}dx\wedge
dy\wedge dv\mid_{X_{m,1}}.
\end{equation*}
The pull back of $\alpha_1$  by an isomorphism $\varphi:X_{m,1}\stackrel{\sim}{\rightarrow} X_{1,1}$ 
would be a nowhere vanishing algebraic 3-form on $X_{m,1}$ whence a nonzero scalar multiple of $\alpha_m$ 
since nonzero constants are the only invertible functions $X_{m,1}$.
On the other hand, since $X_{m,1}$ has the real sphere $S^3$ 
as a strong deformation retract, the de Rham cohomology group $H_{dR}^{3}(X_{m,1},\mathbb{C})$ 
is one dimensional over $\mathbb{C}$. Using the fact that the de Rham cohomology of a smooth complex affine variety
equals the cohomology of its algebraic de Rham complex \cite{Grothendieck1966}, it can be
checked directly that $H_{dR}^{3}(X_{m,1},\mathbb{C})\simeq \Omega^3_{X_{m,1}}/d\Omega^2_{X_{m,1}}$ is spanned by the class of 
$x^{m-1}\alpha _{m}=x^{-1}dx\wedge dy\wedge du\mid_{X_{m,1}}$. The isomorphism $\varphi$ would induce an isomorphism in cohomology 
and since $H_{dR}^{3}(X_{1,1},\mathbb{C})$ is spanned by the class of $\alpha_1$ it would follow that  
$H_{dR}^{3}(X_{m,1},\mathbb{C})$ is spanned by the class of $\alpha_m$ too. This is absurd since for every $m\geq 2$, 
$\alpha _{m}$ is an exact form, having for instance the global 2-form 
$$\frac{dy\wedge du}{(1-m)x^{m-1}}\mid_{X_{m,1}}=\frac{xdy\wedge dv-mvdx\wedge
dy}{(1-m)y}\mid_{X_{m,1}}$$ as a primitive.
\end{parn}
\begin{parn}
\noindent More generally, recall that every non trivial $\mathbb{A}^{1}$%
-bundle $\rho :X\rightarrow \mathbb{A}_{\ast }^{2}$ is isomorphic to one of
the form 
\begin{equation*}
X\left( m,n,p\right) =\left\{ x^{m}v-y^{n}u=p\left( x,y\right) \right\}
\setminus \left\{ x=y=0\right\} \subset \mathbb{A}_{\ast }^{2}\times \mathrm{%
Spec}\left( \mathbb{C}\left[ u,v\right] \right)
\end{equation*}%
where $p\left( x,y\right) \in \mathbb{C}\left[ x,y\right] $ is a polynomial
divisible neither by $x$ nor by $y$ and satisfying $\deg _{x}p<m$ and $\deg
_{y}p<n$. It turns out that varieties $X\left( m,n,p\right) $ corresponding
to a polynomial $p$ of maximum possible degree $m+n-2$ form a distinguished
class. Namely, we have the following result:
\end{parn}
\begin{thm}
\label{thm:DeRhamArg} Let $X_{1}=X\left(m_{1},n_{1},p_{1}\right)$ and $%
X_{2}=X\left(m_{2},n_{2},p_{2}\right)$ be $\mathbb{A}^{1}$-bundles as above.
If $\deg p_{1}=m_{1}+n_{1}-2$ but $\deg p_{2}<m_{2}+n_{2}-2$ then $X_{1}$
and $X_{2}$ are not isomorphic as algebraic varieties.
\end{thm}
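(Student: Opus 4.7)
The plan is to generalize the warm-up argument for $X_{m,1}$ via an algebraic de Rham computation. First, I verify that for every $X=X(m,n,p)$, the canonical sheaf $\omega_X$ is free of rank one, generated by the global nowhere-vanishing 3-form
\[
\alpha \;=\; x^{-m}\,dx\wedge dy\wedge du\bigm|_{X}\;=\;\pm\,y^{-n}\,dx\wedge dy\wedge dv\bigm|_{X},
\]
the two expressions agreeing on $U_x\cap U_y$ by differentiating $x^mv-y^nu=p$. Next, $\mathcal{O}(X)^{\ast}=\mathbb{C}^{\ast}$: the restrictions to $U_x|_X\simeq\mathrm{Spec}\,\mathbb{C}[x^{\pm 1},y,u]$ and $U_y|_X\simeq\mathrm{Spec}\,\mathbb{C}[x,y^{\pm 1},v]$ have unit groups $\mathbb{C}^{\ast}\cdot x^{\mathbb{Z}}$ and $\mathbb{C}^{\ast}\cdot y^{\mathbb{Z}}$, and compatibility on the overlap forces a global unit to be a nonzero constant. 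Together, these two facts imply that any algebraic isomorphism $\varphi:X_2\to X_1$ must satisfy $\varphi^{\ast}\alpha_1=c\,\alpha_2$ for some $c\in\mathbb{C}^{\ast}$. Since $X$ is smooth affine and deformation retracts to $S^3$, Grothendieck's comparison theorem gives $H^3_{\mathrm{dR}}(X)\simeq\Omega^3(X)/d\Omega^2(X)\simeq\mathbb{C}$.

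The heart of the proof is the cohomological dichotomy: $[\alpha]=0$ in $H^3_{\mathrm{dR}}(X(m,n,p))$ if and only if $\deg p<m+n-2$. The Mayer--Vietoris sequence for the covering $\{U_x|_X,U_y|_X\}$ identifies $H^3_{\mathrm{dR}}(X)\simeq H^2_{\mathrm{dR}}(U_x\cap U_y|_X)\simeq\mathbb{C}$, the latter isomorphism arising since $U_x\cap U_y|_X\simeq(\mathbb{C}^{\ast})^2\times\mathbb{C}$ has its $H^2$ generated by $\frac{dx\wedge dy}{xy}$; concretely, the class of a closed 2-form is extracted by restricting to the slice $\{u=0\}$ and reading off the coefficient of $x^{-1}y^{-1}\,dx\wedge dy$ in the resulting Laurent polynomial. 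Under this identification, $[\alpha]$ corresponds to the class of $\beta_x-\beta_y$, where $\beta_x,\beta_y$ are any local primitives of $\alpha$ on $U_x|_X$ and $U_y|_X$ respectively -- such primitives exist because both opens are smooth affine with vanishing $H^3_{\mathrm{dR}}$. Choosing for instance $\beta_x=\tfrac{1}{(1-m)x^{m-1}}\,dy\wedge du$, a direct computation using the differential of $x^mv-y^nu=p$ to re-express $\beta_x$ in coordinates $(x,y,v)$ on the overlap shows that the class of $\beta_x-\beta_y$ equals, up to a nonzero scalar, the coefficient $a_{m-1,n-1}$ of the monomial $x^{m-1}y^{n-1}$ in $p$. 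Since $\deg_x p<m$ and $\deg_y p<n$, this monomial is the unique one of total degree $m+n-2$ compatible with these bounds, so $a_{m-1,n-1}\neq 0$ if and only if $\deg p=m+n-2$.

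Granted this dichotomy, an algebraic isomorphism $\varphi:X_2\to X_1$ would induce $\varphi^{\ast}[\alpha_1]=c[\alpha_2]$ in $H^3_{\mathrm{dR}}(X_2)$, which is nonzero on the left (since $\deg p_1=m_1+n_1-2$) but vanishes on the right (since $\deg p_2<m_2+n_2-2$); this contradiction proves the theorem. The principal obstacle will be the overlap computation: carefully tracking the coefficient of $\frac{dx\wedge dy}{xy}$ in $\beta_x-\beta_y$ modulo exact forms on $U_x\cap U_y|_X$, and verifying that it is precisely proportional to $a_{m-1,n-1}$ and to no other coefficient of $p$. Once this coefficient is identified, the asymmetry in the degree constraints $\deg_x p<m$, $\deg_y p<n$ does the rest, isolating $x^{m-1}y^{n-1}$ as the unique obstructing monomial.
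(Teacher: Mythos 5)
Your proposal is correct and follows essentially the same route as the paper: Mayer--Vietoris for the covering $\{X_x,X_y\}$, local primitives of the canonical $3$-form whose difference on the overlap is $\lambda x^{-m}y^{-n}p(x,y)\,dx\wedge dy$, and detection of the class by the coefficient of $x^{-1}y^{-1}$, i.e.\ the coefficient of $x^{m-1}y^{n-1}$ in $p$, which the constraints $\deg_x p<m$, $\deg_y p<n$ identify with the condition $\deg p=m+n-2$. The only cosmetic point is that your sample primitive $\frac{1}{(1-m)x^{m-1}}dy\wedge du$ degenerates when $m=1$; the paper's uniform choice $\lambda t_x\,dx\wedge dy$ with $t_x=x^{-m}u$ avoids this, but your appeal to $H^2_{\mathrm{dR}}(X_x)=0$ already covers it.
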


\begin{proof}
We will show that the cohomology class in $H_{dR}^{3}\left(X,\mathbb{C}%
\right)\simeq\mathbb{C}$ of an arbitrary nowhere vanishing algebraic $3$%
-form $\omega$ on $X=X\left(m,n,p\right)$ is trivial if $\deg p<m+n-2$ and a
generator otherwise. This prevents in particular the existence of an
isomorphism $f:X_{2}\overset{\sim}{\rightarrow}X_{1}$ : indeed, otherwise, similarly as in the
particular case above, the pull-back of a nowhere vanishing algebraic $3$-form $\omega$ on $X_{1}$
would be a nowhere vanishing algebraic $3$-form $f^{*}\omega$ on $X_{2}$
whose cohomology class $[f^{*}\omega]=f^{*}\left[\omega\right]\in
H_{dR}^{3}(X_{2},\mathbb{C})\simeq H_{dR}^{3}(X_{1},\mathbb{C})$ would
generate $H_{dR}^{3}(X_{2},\mathbb{C})$, a contradiction. Recall \ref%
{exa:LocalDesc} that $X=X\left(m,n,p\right)$ is covered by two principal
affine open subsets $X_{x}\simeq U_{x}\times\mathrm{Spec}\left(\mathbb{C}%
\left[t_{x}\right]\right)$ and $X_{y}\simeq U_{y}\times\mathrm{Spec}\left(%
\mathbb{C}\left[t_{y}\right]\right)$, where $t_{x}=x^{-m}u$ and $%
t_{y}=y^{-n}v$. Since every invertible function on $X$ is constant, a
nowhere vanishing algebraic $3$-form $\omega$ on $X$ is uniquely determined
locally by a pair of $3$-forms $\omega\mid_{X_{x}}=\lambda dx\wedge dy\wedge
dt_{x}\in\Omega_{U_{x}\times\mathbb{A}^{1}}^{3}$ and $\omega\mid_{X_{y}}=%
\lambda dx\wedge dy\wedge dt_{y}\in\Omega_{U_{y}\times\mathbb{A}^{1}}^{3}$,
where $\lambda\in\mathbb{C}^{*}$. Let $(\alpha_{x},\alpha_{y})=(\lambda
t_{x}dx\wedge dy,\lambda t_{y}dx\wedge dy)\in\Omega_{U_{x}\times\mathbb{A}%
^{1}}^{2}\times\Omega_{U_{y}\times\mathbb{A}^{1}}^{2}$ be local primitives
of $\omega\mid_{X_{x}}$ and $\omega\mid_{X_{y}}$ respectively. By definition
of the connecting homomorphism 
\begin{equation*}
\delta:H_{dR}^{2}(X_{x}\cap X_{y},\mathbb{C})\simeq H_{dR}^{2}(U_{x}\cap
U_{y},\mathbb{C})\overset{\sim}{\longrightarrow}H_{dR}^{3}\left(X,\mathbb{C}%
\right)
\end{equation*}
in the Mayer-Vietoris long exact sequence for the covering of $X$ by $X_{x}$
and $X_{y}$, the cohomology class of $\omega\in\Omega_{X}^{3}$ in $%
H_{dR}^{3}\left(X,\mathbb{C}\right)\simeq\Omega_{X}^{3}/d\Omega_{X}^{2}$
coincides with the image by $\delta$ of the cohomology class $\alpha\in
H_{dR}^{2}(X_{x}\cap X_{y},\mathbb{C})$ of the $2$-form 
\begin{eqnarray*}
(\alpha_{y}-\alpha_{x})\mid_{X_{x}\cap X_{y}} & = & (\lambda t_{y}dx\wedge
dy-\lambda t_{x}dx\wedge dy)\mid_{X_{x}\cap X_{y}} \\
& = & \lambda(t_{x}+x^{-m}y^{-n}p\left(x,y\right))dx\wedge dy-\lambda
t_{x}dx\wedge dy \\
& = & \lambda x^{-m}y^{-n}p\left(x,y\right)dx\wedge dy\in\Omega_{X_{x}\cap
X_{y}/\mathbb{C}}^{2}.
\end{eqnarray*}
Such a form is exact if and only if $x^{-m}y^{-n}p\left(x,y\right)$ does not
contain a term of the form $ax^{-1}y^{-1}$, where $a\in\mathbb{C}^{*}$, that
is, if and only if $\deg p<m+n-2$. Thus $\left[\omega\right]%
=\delta\left(\alpha\right)\in H_{dR}^{3}\left(X,\mathbb{C}\right)$ is either
trivial if $\deg p<m+n-2$ or a generator otherwise.
\end{proof}

\begin{cor}
\label{cor:Xmn_are_exotic} The total space of a nontrivial homogeneous $%
\mathbb{G}_{a}$-bundle $\rho:X\rightarrow\mathbb{A}_{*}^{2}$ of degree $%
-d<-2 $ is not isomorphic to $X_{1,1}\simeq\mathrm{SL}_{2}\left(\mathbb{C}%
\right)$.
\end{cor}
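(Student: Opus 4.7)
The plan is to combine Theorem \ref{thm:HomBunIsoType} with Theorem \ref{thm:DeRhamArg}: the former reduces the question to a single chosen representative of $-d$-homogeneous bundles, and the latter supplies the de Rham cohomology invariant that separates this representative from $X_{1,1}$. So the proof is expected to be very short.

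Concretely, I would take as the representative the bundle $X_{d-1,1}=\{x^{d-1}v-yu=1\}$, which in the notation of Example \ref{exa:LocalDesc} is $X(d-1,1,1)$. Its associated \v{C}ech cocycle $x^{-(d-1)}y^{-1}$ lies in $W_{-d}$, so $X_{d-1,1}$ is a nontrivial $-d$-homogeneous $\mathbb{G}_{a}$-bundle. Theorem \ref{thm:HomBunIsoType} then gives an isomorphism of abstract affine varieties $X\simeq X_{d-1,1}$, and it remains only to rule out any algebraic isomorphism $X_{d-1,1}\simeq X_{1,1}$. But $X_{1,1}=X(1,1,1)$ has $\deg p_{1}=0=1+1-2$, whereas $X_{d-1,1}=X(d-1,1,1)$ has $\deg p_{2}=0<d-2=(d-1)+1-2$ precisely because $d>2$. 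Thus the pair falls under the hypotheses of Theorem \ref{thm:DeRhamArg}, which yields the required nonisomorphy.

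There is essentially no obstacle beyond bookkeeping; the substance is already packed into the two preceding theorems. The one point that requires attention is the choice of the representative $X_{d-1,1}$, which is dictated by the numerical constraint $m+n=d$ and the convenience of having the defining polynomial $p=1$ of degree $0$ \textemdash{} strictly less than $m+n-2=d-2$ as soon as $d\geq 3$. Any other model $X(m,n,p)$ of a $-d$-homogeneous bundle, with $p$ homogeneous of degree $m+n-d$ as in Example \ref{exa:HomBunDesc}, would serve equally well, since the hypothesis $d\geq 3$ automatically forces $\deg p<m+n-2$.
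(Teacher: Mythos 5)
Your proof is correct and is essentially identical to the paper's: both reduce via Theorem \ref{thm:HomBunIsoType} to the representative $X(d-1,1,1)$ and then apply Theorem \ref{thm:DeRhamArg} using $\deg p=0<d-2$ versus $\deg p=0=1+1-2$ for $X_{1,1}$. No issues.
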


\begin{proof}
The variety $X_{1,1}=X\left(1,1,1\right)=\left\{ xv-yu=1\right\} $ belongs
to the class $X\left(m,n,p\right)$ with $\deg p=m+n-2$. On the other hand,
by virtue of Theorem \ref{thm:HomBunIsoType}, we may assume that $X\simeq
X\left(d-1,1,1\right)=\left\{ x^{d-1}v-yu=1\right\} $. Since $0=\deg p<d-2$
by hypothesis, the assertion follows from Theorem \ref{thm:DeRhamArg} above.
\end{proof}

\begin{example}
Theorem \ref{thm:DeRhamArg} implies that the total spaces of the $\mathbb{G}%
_{a}$-bundles 
\begin{equation*}
X=\left\{ x^{2}v-y^{2}u=1\right\} \rightarrow\mathbb{A}_{*}^{2}\quad\text{and%
}\quad X^{\prime }=\left\{ x^{2}v-y^{2}u=1+xy\right\} \rightarrow\mathbb{A}%
_{*}^{2}
\end{equation*}
are not isomorphic as algebraic varieties. However, $X$ and $X^{\prime }$
are biholomorphic $\mathbb{A}^{1}$-bundles over $\mathbb{A}_{*}^{2}$.
Indeed, the \v{C}ech $1$-cocycle $(xy)^{-2}\left(1+xy\right)\in
C^{1}(\{U_{x},U_{y}\},\mathcal{H}ol_{\mathbb{A}_{*}^{2}})$ defining $%
X^{\prime }$ is analytically cohomologous to the one $\left(xy\right)^{-2}%
\exp\left(xy\right)$, obtained by multiplying the \v{C}ech $1$-cocyle $%
\left(xy\right)^{-2}$ defining $X$ by the nowhere vanishing holomorphic
function $\exp\left(xy\right)$ on $\mathbb{A}_{*}^{2}$. In contrast with the
algebraic situation considered in the proof of Theorem \ref{thm:DeRhamArg},
the pull back by the corresponding biholomorphism $X\overset{\sim}{%
\rightarrow}X^{\prime }$ of the nowhere vanishing algebraic $3$-form $%
\omega=x^{-2}dx\wedge dy\wedge du\mid_{X^{\prime }}$ on $X^{\prime }$, whose
class generates $H_{dR}^{3}\left(X^{\prime },\mathbb{C}\right)$, is the
nowhere vanishing holomorphic $3$-form $x^{-2}\exp\left(-xy\right)dx\wedge
dy\wedge du\mid_{X}$. The later is analytically cohomologous to the
algebraic $3$-form $-x^{-1}ydx\wedge dy\wedge du\mid_{X}$, whose class
generates $H_{dR}^{3}\left(X,\mathbb{C}\right)$.
\end{example}

\begin{rem}
Since the cylinders $X\times\mathbb{A}^{1}$ and $X^{\prime }\times\mathbb{A}%
^{1}$ are algebraically isomorphic (see \ref{par:cancelRq}), $X$ and $%
X^{\prime }$ above provide a new example of biholomorphic complex algebraic
varieties for which algebraic cancellation fails. Of course, $X$ and $%
X^{\prime }$ are remote from affine spaces from a topological point of view.
But, in contrast with other families of $3$-dimensional counter-examples
constructed so far \cite{Dubouloz2011} \cite{Finston2008}, $X$ and $%
X^{\prime }$ have a trivial Makar-Limanov invariant (see \ref{rem:ML-triv}
above). It is interesting to relate the existing counter-examples to
Miyanishi's characterization of the affine $3$-space $\mathbb{A}^{3}$ \cite%
{Miyanishi1988}, which can be equivalently formulated as the fact that a smooth
affine threefold $X$ is algebraically isomorphic to $\mathbb{A}^{3}$ if and
only is satisfies the following conditions :

(i) There exists a regular function $f:X\rightarrow\mathbb{A}^{1}$ and a
Zariski open subset $U\subset\mathbb{A}^{1}$ such that $f^{-1}\left(U\right)%
\simeq U\times\mathbb{A}^{2}$,

(ii) All scheme theoretic fibers of $f$ are UFDs (i.e. $\Gamma\left(X_{c},%
\mathcal{O}_{X_{c}}\right)$ is a UFD for every fiber $X_{c}=f^{-1}\left(c%
\right)$, $c\in\mathbb{A}^{1}$),

(iii) $H^{3}\left(X,\mathbb{Z}\right)=0$.

\noindent The counter-examples obtained in \cite{Dubouloz2011} for
contractible affine threefolds satisfy (i) and (iii) but not (ii). On the
other hand, $X$ and $X^{\prime }$ above satisfy (i) and (ii) (by choosing
for instance the projection $\mathrm{pr}_{x}$ for $f$) but not (iii). The
Cancellation Problem for $\mathbb{A}^{3}$ itself is still open but we see
that cancellation fails whenever one of the necessary conditions (ii) or
(iii) above is relaxed.
\end{rem}

\section{Appendix}

\subsection{The Danilov-Gizatullin Isomorphism Theorem}

\indent\newline
\noindent This subsection is devoted to the proof of the following result,
which is a slight refinement of the so-called Danilov-Gizatullin Isomorphism
Theorem.

\begin{thm}
\label{thm:DanGizIso}Let $\nu:Y\rightarrow\mathbb{P}^{1}$ and $\nu^{\prime
}:Y^{\prime }\rightarrow\mathbb{P}^{1}$ be non trivial principal $\mathcal{O}%
_{\mathbb{P}^{1}}\left(-d\right)$-bundles for a certain $d\geq2$. Then there
exists an isomorphism $f:Y^{\prime }\overset{\sim}{\rightarrow}Y$ such that $%
f^{*}(\nu^{*}\mathcal{O}_{\mathbb{P}^{1}}\left(1\right))\simeq\left(\nu^{%
\prime }\right)^{*}\mathcal{O}_{\mathbb{P}^{1}}\left(1\right)$. In
particular, for a fixed $d\geq2$, the total spaces of non trivial $\mathcal{O%
}_{\mathbb{P}^{1}}\left(-d\right)$-bundles are all isomorphic as abstract
algebraic varieties.
\end{thm}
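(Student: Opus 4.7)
The plan is to invoke the classical Danilov-Gizatullin Isomorphism Theorem to obtain some isomorphism $g:Y'\overset{\sim}{\to}Y$ of abstract varieties, and then to upgrade $g$ to an isomorphism $f$ with the required pull-back property by a canonical-bundle argument, handling the degenerate case $d=2$ separately. First, the natural projective completion $\bar Y$ of $Y$, obtained by adjoining the point at infinity to each fiber, is a Hirzebruch surface, and the complement $C=\bar Y\setminus Y$ is a smooth rational section of self-intersection $d$, as a short computation with the transition cocycle of $\nu$ shows; the same applies to $Y'$. Gizatullin's theorem \cite[Theorem~5.8.1 and Remark~4.8.6]{Gizatullin1977} then yields $g$.

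For the refinement I would use two ingredients already established in the proof of Theorem \ref{thm:HomBunIsoType}: $\mathrm{Pic}(Y)\simeq\mathbb{Z}$ is generated by $\nu^{*}\mathcal{O}_{\mathbb{P}^{1}}(1)$, and the relative cotangent sequence yields $\omega_{Y}\simeq\nu^{*}\mathcal{O}_{\mathbb{P}^{1}}(d-2)$, with the analogous statements for $Y'$. Since $g^{*}\omega_{Y}=\omega_{Y'}$, the induced isomorphism $g^{*}:\mathrm{Pic}(Y)\overset{\sim}{\to}\mathrm{Pic}(Y')$ sends the generator $\nu^{*}\mathcal{O}_{\mathbb{P}^{1}}(1)$ to $\pm(\nu')^{*}\mathcal{O}_{\mathbb{P}^{1}}(1)$, and the induced identity $(d-2)\,g^{*}[\nu^{*}\mathcal{O}_{\mathbb{P}^{1}}(1)]=(d-2)\,[(\nu')^{*}\mathcal{O}_{\mathbb{P}^{1}}(1)]$ in $\mathbb{Z}$ forces the $+$ sign as soon as $d-2\neq 0$. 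Thus for $d\geq 3$ we may simply take $f=g$, and the second (abstract) assertion of the theorem is clearly contained in the first.

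The main obstacle is the case $d=2$, in which $\omega_{Y}$ is trivial and the canonical-bundle argument carries no information about the sign of $g^{*}[\nu^{*}\mathcal{O}_{\mathbb{P}^{1}}(1)]$. Here I would bypass $g$ altogether and argue via direct classification: $\dim_{\mathbb{C}}H^{1}(\mathbb{P}^{1},\mathcal{O}_{\mathbb{P}^{1}}(-2))=1$, so after the $\mathbb{C}^{*}$-reparametrization action recalled in the introduction to Section 1 every non-trivial $\mathcal{O}_{\mathbb{P}^{1}}(-2)$-bundle over $\mathbb{P}^{1}$ represents the same isomorphism class of $\mathbb{A}^{1}$-bundle. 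There is thus an $\mathbb{A}^{1}$-bundle isomorphism $f:Y'\overset{\sim}{\to}Y$ over $\mathbb{P}^{1}$ (i.e.\ one satisfying $\nu\circ f=\nu'$), and such an $f$ automatically fulfils $f^{*}(\nu^{*}\mathcal{O}_{\mathbb{P}^{1}}(1))\simeq(\nu')^{*}\mathcal{O}_{\mathbb{P}^{1}}(1)$, completing the proof.
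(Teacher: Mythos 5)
Your proposal is correct, but it reaches the refinement by a genuinely different route from the paper. The paper does not treat the classical Danilov--Gizatullin theorem as a black box: it reproves it in this setting by producing a section $\tilde{C}$ of $\pi_{n}$ meeting the boundary section $C$ in a single point $q$ with multiplicity $d-1$ (the content of the final lemma of the appendix), using the resulting pencil to identify $Y$ with an explicit standard model $\theta_{d}:S\left(d\right)\rightarrow\mathbb{A}^{1}$, and observing that this identification carries the $\nu$-fiber $\ell_{0}=\nu^{-1}(\pi_{n}(q))$ onto a fixed curve $\Delta\subset S\left(d\right)$; since $\mathrm{Pic}\left(Y\right)$ is generated by a fiber class, composing two such identifications gives $f$ with the required effect on $\nu^{\ast}\mathcal{O}_{\mathbb{P}^{1}}\left(1\right)$, uniformly for all $d\geq2$. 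You instead quote \cite[Theorem 5.8.1 and Remark 4.8.6]{Gizatullin1977} for the bare existence of $g$ and then correct the possible sign ambiguity on $\mathrm{Pic}\simeq\mathbb{Z}$ via $g^{\ast}\omega_{Y}\simeq\omega_{Y'}$ and $\omega_{Y}\simeq\nu^{\ast}\mathcal{O}_{\mathbb{P}^{1}}\left(d-2\right)$, which pins down the sign exactly when $d\neq2$; your separate treatment of $d=2$ is also sound, since $\dim H^{1}(\mathbb{P}^{1},\mathcal{O}_{\mathbb{P}^{1}}\left(-2\right))=1$ means the $\mathbb{C}^{\ast}$-rescaling of the cocycle (an automorphism of the structure group scheme $\mathcal{O}_{\mathbb{P}^{1}}\left(-2\right)$) already identifies any two nontrivial bundles over $\mathrm{id}_{\mathbb{P}^{1}}$, and an isomorphism over $\mathbb{P}^{1}$ trivially pulls back $\nu^{\ast}\mathcal{O}_{\mathbb{P}^{1}}\left(1\right)$ correctly. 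Your argument is shorter and reduces the refinement to soft facts already established in the proof of Theorem \ref{thm:HomBunIsoType} (you should still note that ampleness of $C$, needed to invoke the classical theorem, is equivalent to nontriviality of the bundle, and that $\nu^{\ast}\mathcal{O}_{\mathbb{P}^{1}}\left(1\right)$ really generates $\mathrm{Pic}\left(Y\right)$ because $\{C,\ell\}$ is a basis of $\mathrm{Pic}\left(\mathbb{F}_{n}\right)$); what it gives up is the self-contained proof and the explicit normal form $S\left(d\right)$ that the paper's construction provides.
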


\begin{parn}
The Danilov-Gizatullin Theorem \cite[Theorem 5.8.1 ]{Gizatullin1977} is
actually stated as the fact that the isomorphy type of the complement of an
ample section $C$ in a Hirzebruch surface $\pi_{n}:\mathbb{F}_{n}=\mathbb{P}%
\left(\mathcal{O}_{\mathbb{P}^{1}}\oplus\mathcal{O}_{\mathbb{P}%
^{1}}\left(-n\right)\right)\rightarrow\mathbb{P}^{1}$, $n\geq0$, (see e.g. 
\cite[V.2]{Hartshorne1977}) depends only on the self-intersection $C^{2}$ of 
$C$, whence, in particular, depends neither on the ambient surface nor on
the choice of a particular section. The relation with Theorem \ref%
{thm:DanGizIso} above is given by the observation that a non trivial $%
\mathcal{O}_{\mathbb{P}^{1}}\left(-d\right)$-bundle $\nu:Y\rightarrow\mathbb{%
P}^{1}$ always arises as the complement of an ample section $C$ with
self-intersection $C^{2}=d$ in a suitable Hirzebruch surface \cite[Remark
4.8.6]{Gizatullin1977}. Indeed, letting $0\rightarrow\mathcal{O}_{\mathbb{P}%
^{1}}\rightarrow\mathcal{E}\rightarrow\mathcal{O}_{\mathbb{P}%
^{1}}\left(d\right)\rightarrow0$ be an extension of locally free sheaves on $%
\mathbb{P}^{1}$ representing the isomorphy class of $Y$ in $H^{1}(\mathbb{P}%
^{1},\mathcal{O}_{\mathbb{P}^{1}}\left(-d\right))\simeq\mathrm{Ext}_{\mathbb{%
P}^{1}}^{1}\left(\mathcal{O}_{\mathbb{P}^{1}}\left(d\right),\mathcal{O}_{%
\mathbb{P}^{1}}\right)$, $Y$ is isomorphic to the complement in the $\mathbb{%
P}^{1}$-bundle $\pi:S=\mathrm{Proj}_{\mathbb{P}^{1}}\left(\mathrm{Sym}\left(%
\mathcal{E}\right)\right)\rightarrow\mathbb{P}^{1}$ of the section $C$
determined by the surjection $\mathcal{E}\rightarrow\mathcal{O}_{\mathbb{P}%
^{1}}\left(d\right)$. Since $\mathcal{E}$ is a decomposable locally free
sheaf of rank $2$, degree $-d$, equipped with a surjection onto $\mathcal{O}%
_{\mathbb{P}^{1}}\left(d\right)$, it is isomorphic $\mathcal{O}_{\mathbb{P}%
^{1}}\left(a\right)\oplus\mathcal{O}_{\mathbb{P}^{1}}\left(d-a\right)$ for a
certain $a\in\mathbb{Z}$ such that, up to replacing $a$ by $d-a$, we have
either $a=0$ or $d-a\geq a>0$. Therefore, $S\simeq\mathrm{Proj}_{\mathbb{P}%
^{1}}\left(\mathrm{Sym}\left(\mathcal{E}\otimes\mathcal{O}_{\mathbb{P}%
^{1}}\left(a-d\right)\right)\right)\simeq\mathbb{F}_{n}$, where $n=d-2a\geq0$%
, with the section $C$ determined by a surjection $\mathcal{O}_{\mathbb{P}%
^{1}}\oplus\mathcal{O}_{\mathbb{P}^{1}}\left(-n\right)\rightarrow\mathcal{O}%
_{\mathbb{P}^{1}}\left(a\right)$. Letting $C_{0}$ be a section with
self-intersection $-n\leq0$ and $\ell$ be a fiber of $\pi_{n}$, we have $%
C\sim C_{0}+\left(d-a\right)\ell$, which implies that $C^{2}=d$.
Furthermore, $a=0$ if and only if the above extension splits, that is, if
and only if $\nu:Y\rightarrow\mathbb{P}^{1}$ is the trivial $\mathcal{O}_{%
\mathbb{P}^{1}}\left(-d\right)$-bundle. Otherwise, $d-a>n$, and so, $C$ is
the support of an ample divisor on $S$ \cite[2.20 p. 382 ]{Hartshorne1977}.
\end{parn}

\begin{parn}
\label{par:Proof-Strat} The existence of an isomorphism $f$ with the
required property can actually be derived from a careful reading of the
recent proof of the Danilov-Gizatullin Theorem given in \cite{Flenner2009}.
However, for the convenience of the reader, we provide a complete argument.
Our strategy is very similar to the one in \emph{loc. cit.}: we establish
that the total space of a non trivial $\mathcal{O}_{\mathbb{P}%
^{1}}\left(-d\right)$-bundle $\nu:Y\rightarrow\mathbb{P}^{1}$ is equipped
with a certain type of smooth fibration $\theta:Y\rightarrow\mathbb{A}^{1}$
with general fibers isomorphic to $\mathbb{A}^{1}$ which, for a fixed $%
d\geq2 $, admits a unique model $\theta_{d}:S\left(d\right)\rightarrow%
\mathbb{A}^{1} $ up to isomorphism of fibrations. Since $\mathrm{Pic}%
\left(Y\right)\simeq\mathrm{CaCl}\left(Y\right)$ is generated by the class
of a fiber of $\nu$, Theorem \ref{thm:DanGizIso} will then follows from the
additional observation that one can always choose a special isomorphism of
fibrations $\psi:Y\overset{\sim}{\rightarrow}S\left(d\right)$ which maps a
suitable fiber of $\nu$ onto a fixed irreducible component $\Delta$ of a
fiber of $\theta_{d}$.
\end{parn}

\begin{parn}
The fibration $\theta:Y\rightarrow\mathbb{A}^{1}$ is constructed as follows.
We may suppose that the non trivial $\mathcal{O}_{\mathbb{P}%
^{1}}\left(-d\right)$-bundle $\nu:Y\rightarrow\mathbb{P}^{1}$ is embedded in
a Hirzebruch surface $\pi_{n}:\mathbb{F}_{n}\rightarrow\mathbb{P}^{1}$ for a
certain $n\geq0$ as the complement of an ample section $C$ with $%
C^{2}=d\geq2 $. Now suppose that there exists another section $\tilde{C}$ of 
$\pi_{n}$ intersecting $C$ in a unique point $q$, with multiplicity $C\cdot%
\tilde{C}=d-1$. Letting $\ell=\pi_{n}^{-1}\left(\pi_{n}\left(q\right)\right)$%
, the divisors $C$ and $\tilde{C}+\ell$ are linearly equivalent and define a
pencil of rational curves $g:\mathbb{F}_{n}\dashrightarrow\mathbb{P}^{1}$
with $q$ as a unique proper base point. This pencil restricts on $Y=\mathbb{F%
}_{n}\setminus C$ to a smooth surjective morphism $\theta:Y\rightarrow B=%
\mathbb{P}^{1}\setminus\left\{ \overline{g}\left(C\right)\right\} \simeq%
\mathbb{A}^{1}$ with general fibers isomorphic to $\mathbb{A}^{1}$ and with
a unique degenerate fiber, say $\theta^{-1}\left(0\right)$ up to the choice
of a suitable coordinate $x$ on $B\simeq\mathbb{A}^{1}$, consisting of the
disjoint union of $\tilde{C}_{0}=\tilde{C}\setminus\left\{ q\right\} \simeq%
\mathbb{A}^{1}$ and $\ell_{0}=\ell\setminus\left\{ q\right\} \simeq\mathbb{A}%
^{1}$. A minimal resolution $\overline{g}:W\rightarrow\mathbb{P}^{1}$ of $g:%
\mathbb{F}_{n}\dashrightarrow\mathbb{P}^{1}$ is obtained from $\mathbb{F}%
_{n} $ by blowing up $d$ times the point $q$, with successive exceptional
divisors $E_{1},\ldots,E_{d}$, the last exceptional divisor $E_{d}$ being a
section of $\overline{g}$. The proper transform of $C$ in $W$ is a full
fiber of $\overline{g}$, whereas the proper transforms of $\tilde{C}$ and $%
\ell$ are both $-1$-curves contained in the unique degenerate fiber $%
\overline{g}^{-1}\left(0\right)=E_{1}+\cdots+E_{d-1}+\tilde{C}+\ell$ of $%
\overline{g}$. Since $E_{1}\cup\dots\cup E_{d-1}$ is a chain of $%
\left(-2\right)$-curves, by contracting successively $\ell$, $%
E_{1},\ldots,E_{d-2}$ and $\tilde{C}$ we obtain a birational morphism $%
\overline{\tau}:W\rightarrow\mathbb{F}_{1}$. The later restricts to a
morphism $\tau:Y\simeq W\setminus C\cup\bigcup_{i=1}^{d}E_{i}\rightarrow%
\mathbb{F}_{1}\setminus C\cup E_{d}\simeq B\times\mathbb{A}^{1}$ of schemes
over $B$, inducing an isomorphism $Y\setminus\tilde{C}_{0}\cup\ell_{0}%
\overset{\sim}{\rightarrow}B\setminus\left\{ 0\right\} \times\mathbb{A}^{1}$
and contracting $\tilde{C}_{0}$ and $\ell_{0}$ to distinct points supported
on $\left\{ 0\right\} \times\mathbb{A}^{1}\subset B\times\mathbb{A}^{1}$
(see Figure \ref{fig:figBir} below).
\end{parn}

\begin{figure}[!htb]
\psset{linewidth=0.5pt} 
\begin{pspicture}(-1.8,4.5)(10,-2)

\rput(-3,0){
 \pscustom{
\pscurve(1, -0.3)(2.25,-0.4)(3.5,-0.3)
\psline(3.5,0)(3.5,1.7)
\pscurve (2.25,1.6)(1,1.7)
\psline(1,1.7)(1,-0.3)}

\psline[linestyle=dotted](1.5,-0.3)(1.5,1.65)
\psline[linestyle=dotted](2.5,-0.35)(2.5,1.55)
\psline[linestyle=dotted](3,-0.3)(3,1.6)

\rput(4,1.8){$\mathbb{F}_n$}

\rput(0.8,1.3){{\small $\tilde{C}$}}
\rput(0,1.3){\pscurve[linewidth=1pt](1,-0.2)(1.5,-0.5)(2,-0.7)(2.7,-0.5)(3,-0.3)(3.5,0.1)}

\rput(2.1,0.45){{\scriptsize $q$}}
\rput(0,1.3){\pscurve[linewidth=0.2pt,linecolor=gray,linestyle=dashed](1,-1.1)(1.5,-0.9)(2,-0.7)(2.7,-1)(3,-1.1)(3.5,-1)}
\rput(0,1.3){\pscurve[linewidth=0.2pt,linecolor=gray,linestyle=dashed](1,-0.65)(1.5,-0.7)(2,-0.7)(2.7,-0.75)(3,-0.85)(3.5,-0.75)}
\rput(0,1.3){\pscurve[linewidth=0.2pt,linecolor=gray,linestyle=dashed](1,-0.5)(1.5,-0.6)(2,-0.7)(2.7,-0.7)(3,-0.75)(3.5,-0.6)}

\rput(0.8,0.4){{\small $C$}}
\rput(0,1.3){\pscurve[linewidth=1pt](1,-0.9)(1.5,-0.8)(2,-0.7)(2.7,-0.9)(3,-1)(3.5,-0.9)}

\rput(2.2,0){{\small $\ell$}}
\psline[linewidth=1pt](2,-0.4)(2,1.6)

\rput(4,-1.2){$\mathbb{P}^1$}
\rput(0,-1){ \pscurve(1, -0.3)(2.25,-0.4)(3.5,-0.3)}
\psline{->}(2.25,-0.5)(2.25,-1.3)\rput(2.6,-0.9){{\small $\pi_n$}}

}

\rput(2.5,3){

\rput(3.3,1.3){$W$}
\psline(1,1)(3,1)
\rput(1.8,1.2){{\scriptsize $E_d$}}
\rput(2.3,1.2){{\scriptsize -$1$}}
\psline[linewidth=1pt](3,1)(3,-1)
\rput(3.2,0){{\small $C$}}
\rput(3.2,-0.4){{\scriptsize $0$}}
\psline(1,1.1)(1.4,0.2)
\rput(0.7,0.8){{\scriptsize $E_{d-1}$}}
\rput(1.3,0.8){{\scriptsize -$2$}}
\psline(1.4,0.4)(0.7,-0.6)
\rput(0.6,0.1){{\scriptsize $E_{d-2}$}}
\rput(1.3,-0.1){{\scriptsize -$2$}}
\psline[linestyle=dashed](0.7,-0.3)(1.2,-1.2)
\psline(1.2,-1)(0.8,-1.9)
\rput(0.8,-1.2){{\scriptsize $E_1$}}
\rput(1.1,-1.7){{\scriptsize -$2$}}
\psline[linewidth=1pt](1,0.5)(1.9,0.5)
\rput(1.7,0.3){{\small $\tilde{C}$}}
\rput(1.7,0.65){{\scriptsize -$1$}}
\psline[linewidth=1pt](0.8,-1.5)(1.7,-1.5)
\rput(1.6,-1.7){{\small $\ell$}}
\rput(1.6,-1.35){{\scriptsize -$1$}}

\psline[linewidth=0.2pt,linecolor=gray,linestyle=dashed](2,1)(2,-1)
\psline[linewidth=0.2pt,linecolor=gray,linestyle=dashed](2.25,1)(2.25,-1)
\psline[linewidth=0.2pt,linecolor=gray,linestyle=dashed](2.5,1)(2.5,-1)

\psline(1,-2.5)(3,-2.5)
\rput(3.3,-2.5){$\mathbb{P}^1$}

\psline{->}(2,-1.8)(2,-2.3)\rput(2.3,-2){{\small $\overline{g}$}}

}

\rput(7,0.5){

\rput(3.3,1.3){$\mathbb{F}_1$}
\psline(1,1)(3,1)
\rput(1.8,1.2){{\scriptsize $E_d$}}
\rput(2.3,1.2){{\scriptsize -$1$}}
\psline[linewidth=1pt](3,1)(3,-1)
\rput(3.2,0){{\small $C$}}
\rput(3.2,-0.3){{\scriptsize $0$}}
\psline(1,1)(1,-1)
\rput(1.4,0){{\scriptsize $E_{d-1}$}}
\rput(1.2,-0.3){{\scriptsize $0$}}
\rput(1,0.4){\textbullet}
\rput(0.6,0.4){{\scriptsize $\overline{\tau}(\tilde{C})$}}
\rput(1,-0.5){\textbullet}
\rput(0.6,-0.5){{\scriptsize $\overline{\tau}(\ell)$}}
\psline[linewidth=0.2pt,linecolor=gray,linestyle=dashed](2,1)(2,-1)
\psline[linewidth=0.2pt,linecolor=gray,linestyle=dashed](2.25,1)(2.25,-1)
\psline[linewidth=0.2pt,linecolor=gray,linestyle=dashed](2.5,1)(2.5,-1)

\psline(1,-2)(3,-2)
\rput(3.3,-2){$\mathbb{P}^1$}

\psline{->}(2,-1.2)(2,-1.9)\rput(2.3,-1.5){{\small $\pi_1$}}

}

\psline{->}(2.5,2.5)(1.5,1.5)
\psline{->}(6.5,2.5)(7.5,1.5)
\rput(7.2,2.2){{\small $\overline{\tau}$}}
\end{pspicture}
\caption{Resolution of the pencil $g:\mathbb{F}_n\dashrightarrow \mathbb{P}%
^1 $ and the contraction $\overline{\protect\tau} :W\rightarrow \mathbb{F}_1$%
.}
\label{fig:figBir}
\end{figure}
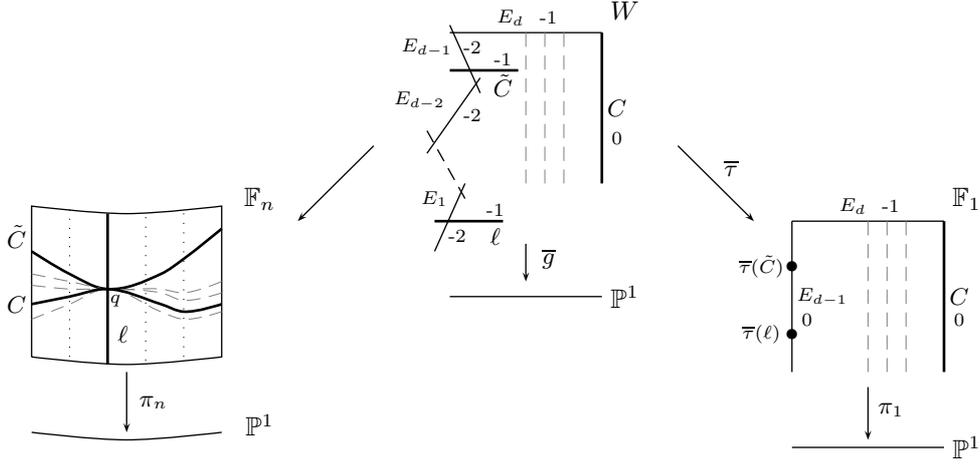

\begin{parn}
Up to a suitable choice of coordinate on the second factor of $B\times%
\mathbb{A}^{1}$, we may assume that $\tau(\tilde{C}_{0})=\left(0,1\right)$
and $\tau(\ell_{0})=\left(0,0\right)$. It then follows from the construction
of $\tau$ that there exists isomorphisms $Y\setminus\ell_{0}\simeq B\times%
\mathrm{Spec}\left(\mathbb{C}\left[u_{1}\right]\right)$ and $Y\setminus%
\tilde{C}_{0}\simeq B\times\mathrm{Spec}\left(\mathbb{C}\left[u_{2}\right]%
\right)$ for which the restrictions of $\tau:Y\rightarrow B\times\mathbb{A}%
^{1}$ to $Y\setminus\ell_{0}$ and $Y\setminus\tilde{C}_{0}$ coincide
respectively with the birational morphisms 
\begin{equation*}
Y\setminus\ell_{0}\rightarrow B\times\mathbb{A}^{1},\;\left(x,u_{1}\right)%
\mapsto\left(x,xu_{1}+1\right)\quad\text{and}\quad Y\setminus\tilde{C}%
_{0}\rightarrow B\times\mathbb{A}^{1},\;\left(x,u_{2}\right)%
\mapsto(x,x^{d-1}u_{2}+\sum_{i=1}^{d-2}a_{i}x^{i}),
\end{equation*}
where the complex numbers $a_{1},\ldots,a_{d-1}$ depend on the successive
centers of $\overline{\tau}:W\rightarrow\mathbb{F}_{1}$. Replacing $u_{1}$
by $v_{1}=u_{1}-\sum_{i=1}^{d-2}a_{i}x^{i-1}$ yields a new isomorphism $%
Y\setminus\ell_{0}\simeq B\times\mathrm{Spec}\left(\mathbb{C}\left[v_{1}%
\right]\right)$. Letting $v_{2}=u_{2}$, we can eventually identify $%
\theta:Y\rightarrow B$ with the surface $\theta_{d}:S\left(d\right)%
\rightarrow B$ obtained by gluing two copies $S_{1}=B\times\mathrm{Spec}%
\left(\mathbb{C}\left[v_{1}\right]\right)$ and $S_{2}=B\times\mathrm{Spec}%
\left(\mathbb{C}\left[v_{2}\right]\right)$ of $B\times\mathbb{A}^{1}$ along $%
\left(B\setminus\left\{ 0\right\} \right)\times\mathbb{A}^{1}$ by the
isomorphism 
\begin{equation*}
S_{1}\supset\left(B\setminus\left\{ 0\right\} \right)\times\mathbb{A}%
^{1}\ni\left(x,v_{1}\right)\mapsto\left(x,x^{2-d}v_{1}+x^{1-d}\right)\in%
\left(B\setminus\left\{ 0\right\} \right)\times\mathbb{A}^{1}\subset S_{2}.
\end{equation*}
\end{parn}

\begin{parn}
\label{par:Section} Summing up, starting from a section $\tilde{C}$ of $%
\pi_{n}$ intersecting $C$ in a unique point $q$ with multiplicity $d-1$, we
constructed an isomorphism $Y=\mathbb{F}_{n}\setminus C\overset{\sim}{%
\rightarrow}S\left(d\right)$ which maps $\nu^{-1}(\pi_{n}(q))=\ell_{0}$
isomorphically onto the curve $\Delta=\left\{ x=0\right\} \subset S_{2}$. So
Theorem \ref{thm:DanGizIso} eventually follows from the next lemma (see also 
\cite[Prop. 4.8.11]{Gizatullin1977}), which guarantees the existence of
sections $\tilde{C}$ with the required property.
\end{parn}

\begin{lem}
Let $\pi_{n}:\mathbb{F}_{n}\rightarrow\mathbb{P}^{1}$, $n\geq0$, be a
Hirzebruch surface and let $C\subset\mathbb{F}_{n}$ be an ample section with
self-intersection $d\geq2$. Then given a general point $q\in C$, there
exists a section $\tilde{C}$ such that $C\cdot\tilde{C}=\left(d-1\right)q$.
\end{lem}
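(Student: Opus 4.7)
The plan is to find the desired $\tilde{C}$ as an element of the linear system $|C-\ell_q|$, where $\ell_q:=\pi_n^{-1}(\pi_n(q))$: any element of class $[C]-[\ell]$ automatically satisfies $\tilde{C}\cdot C=d-1$, so if $\tilde{C}$ turns out to be an irreducible section meeting $C$ only at $q$, the required equality $\tilde{C}\cdot C=(d-1)q$ follows.

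To construct $\tilde{C}$, I would first observe that the restriction map
$$\rho:H^0(\mathbb{F}_n,\mathcal{O}(C-\ell_q))\longrightarrow H^0(C,\mathcal{O}_C(d-1))$$
is an isomorphism, via the exact sequence $0\to\mathcal{O}(-\ell_q)\to\mathcal{O}(C-\ell_q)\to\mathcal{O}_C(d-1)\to 0$ together with the vanishings $H^i(\mathbb{F}_n,\mathcal{O}(-\ell_q))\simeq H^i(\mathbb{P}^1,\mathcal{O}(-1))=0$ for $i=0,1$. Using $\mathcal{O}_C(d-1)\simeq\mathcal{O}_{\mathbb{P}^1}(d-1)$, the divisor $(d-1)q$ on $C$ corresponds to a unique (up to scalar) section of $\mathcal{O}_C(d-1)$, whose preimage under $\rho$ yields a section $s\in H^0(\mathcal{O}(C-\ell_q))$ whose zero divisor $\tilde{C}$ satisfies $\tilde{C}\cdot C=(d-1)q$ scheme-theoretically. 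Any fiber $\ell_p$ contained in $\tilde{C}$ would contribute $\ell_p\cap C$ to the support of $\tilde{C}\cdot C=\{q\}$, forcing $p=\pi_n(q)$, so the only possible fiber component of $\tilde{C}$ is $\ell_q$. If this does not occur, then the identity $\tilde{C}\cdot\ell'=1$ for every fiber $\ell'$ forces $\tilde{C}$ to be purely horizontal and irreducible, hence a section of $\pi_n$.

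The main obstacle is to show that $\tilde{C}$ does not contain $\ell_q$ for general $q\in C$. A divisibility $s=\ell_q\cdot s'$ with $s'\in H^0(\mathbb{F}_n,\mathcal{O}(C-2\ell_q))$ would force $s'|_C=q^{d-2}\in H^0(\mathcal{O}_C(d-2))$, the unique (up to scalar) section with divisor $(d-2)q$. The cohomology sequence of $0\to\mathcal{O}(-2\ell_q)\to\mathcal{O}(C-2\ell_q)\to\mathcal{O}_C(d-2)\to 0$, combined with $H^1(\mathbb{F}_n,\mathcal{O}(-2\ell_q))\simeq H^1(\mathbb{P}^1,\mathcal{O}(-2))\simeq\mathbb{C}$ and the vanishing $H^1(\mathbb{F}_n,\mathcal{O}(C-2\ell_q))=0$ (a consequence of the ampleness of $C$), identifies the image of the restriction $H^0(\mathbb{F}_n,\mathcal{O}(C-2\ell_q))\to H^0(\mathcal{O}_C(d-2))$ as the hyperplane cut out by a nonzero connecting homomorphism $\delta$. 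Hence $\ell_q\subset\tilde{C}$ precisely when $\delta(q^{d-2})=0$. Describing $\delta$ via Serre duality as a residue computation at $q$ in local coordinates on $\mathbb{F}_n$, one verifies that $q\mapsto\delta(q^{d-2})$ is a nonzero regular section on $C$ and thus vanishes on at most a proper closed subvariety, which gives the generic statement. The case $d=2$ is immediate, since $H^0(\mathcal{O}(C-2\ell_q))=0$ makes the problematic divisibility impossible a priori.
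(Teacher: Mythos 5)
Your route is genuinely different from the paper's. The paper works inside explicit \v{C}ech trivializations of the $\mathcal{O}_{\mathbb{P}^{1}}(-d)$-bundle $Y=\mathbb{F}_{n}\setminus C$ and produces $\tilde{C}$ as the closure of a rational section $z\mapsto(z-\lambda)^{1-d}s(z)$, reducing everything to the nonvanishing of the explicit polynomial $r(z)=\sum_{i=0}^{d-2}(-1)^{i+1}\tbinom{d-2}{i}a_{i+1}z^{i}$ built from the cocycle $p(z)=\sum a_{i}z^{i}$. You instead work upstairs with the linear system $|C-\ell_{q}|$. Your steps are correct as far as they go: the restriction isomorphism onto $H^{0}(\mathcal{O}_{C}(d-1))$, the observation that the only possible degeneration is $\ell_{q}\subset\tilde{C}$, and the reduction to $\delta(q^{d-2})\neq0$ all check out; and the vanishing $H^{1}(\mathbb{F}_{n},\mathcal{O}(C-2\ell_{q}))=0$ does follow from ampleness, though you should justify it (e.g. $\pi_{n*}\mathcal{O}(C)\simeq\mathcal{O}(a)\oplus\mathcal{O}(d-a)$ with $a=C\cdot C_{0}$, and ampleness is exactly $1\le a\le d-1$ after normalizing $a\le d-a$).

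The one genuine gap is your final sentence: the nonvanishing of $q\mapsto\delta(q^{d-2})$ is asserted (``one verifies\dots'') rather than proved, and this is precisely where the content of the lemma sits --- it is the exact counterpart of the paper's computation that $r(z)\neq0$, i.e. the point where nontriviality of the extension enters. Fortunately it closes easily, with no residue computation needed: since $C-2\ell_{q}\sim C-2\ell$ for a fixed fiber $\ell$, the image of $H^{0}(\mathcal{O}(C-2\ell_{q}))$ in $H^{0}(\mathcal{O}_{C}(d-2))$ is a \emph{fixed} hyperplane $H=\ker\delta$ independent of $q$, while the sections $q^{d-2}$, $q\in C\simeq\mathbb{P}^{1}$, span all of $H^{0}(\mathcal{O}_{C}(d-2))$ (nondegeneracy of the rational normal curve, equivalently invertibility of a Vandermonde matrix); hence they cannot all lie in $H$, and $\delta(q^{d-2})$, a degree-$(d-2)$ form in $q$, vanishes for at most $d-2$ points. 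With that sentence supplied your argument is complete, and arguably cleaner and more intrinsic than the paper's coordinate computation; your remark that the case $d=2$ is automatic is also correct.
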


\begin{proof}
The existence of a section $\tilde{C}$ such that $C\cdot\tilde{C}%
=\left(d-1\right)q$ for a certain $q\in C$ is equivalent to the existence of
a rational section of the induced $\mathbb{A}^{1}$-bundle $%
\nu=\pi_{n}\mid_{Y}:Y=\mathbb{F}_{n}\setminus C\rightarrow\mathbb{P}^{1}$
with a pole of order $d-1$ at the point $\pi_{n}\left(q\right)$. Since $%
\nu:Y\rightarrow\mathbb{P}^{1}=\mathrm{Proj}\left(\mathbb{C}\left[w_{0},w_{1}%
\right]\right)$ is a non trivial $\mathcal{O}_{\mathbb{P}^{1}}\left(-d%
\right) $-bundle, we can find local trivializations $\tau_{1}:\nu^{-1}%
\left(U_{w_{1}}\right)\overset{\sim}{\rightarrow}U_{w_{1}}\times\mathrm{Spec}%
\left(\mathbb{C}\left[u\right]\right)$ and $\tau_{0}:\nu^{-1}\left(U_{w_{0}}%
\right)\overset{\sim}{\rightarrow}U_{w_{0}}\times\mathrm{Spec}\left(\mathbb{C%
}\left[v\right]\right)$ such that, letting $z=w_{0}/w_{1}$, the isomorphism $%
\tau_{0}\circ\tau_{1}^{-1}\mid_{U_{w_{1}}\cap U_{w_{0}}}$ has the form $%
\left(z,v\right)\mapsto\left(z^{-1},z^{d}u+p\left(z\right)\right)$ for a
nonzero polynomial $p\left(z\right)\in z\mathbb{C}\left[z\right]$ of degree $%
\deg p<d$. In these trivializations, a rational section of $\nu$ with pole
of order $d-1$ at a point $\lambda=\pi_{n}\left(q\right)\in U_{w_{1}}\cap
U_{w_{0}}$ is uniquely determined by a rational function $%
f_{1}:U_{w_{1}}\dashrightarrow\mathbb{A}^{1}$, $z\mapsto\left(z-\lambda%
\right)^{1-d}s\left(z\right)$ such that $\lambda\neq0$, $s\left(z\right)\in%
\mathbb{C}\left[z\right]$ does not vanish at $\lambda$, and such that $%
z^{d}s\left(z\right)+\left(z-\lambda\right)^{d-1}p\left(z\right)\in
O_{\infty}\left(z^{d-1}\right)$. Indeed, the last condition guarantees that $%
z^{d}f_{1}+p\left(z\right)$ extends to a rational function $%
f_{0}:U_{w_{0}}\dashrightarrow\mathbb{A}^{1}$ regular at the origin, whence
that the local rational sections $f_{1}$ and $f_{0}$ of $\nu$ glue to a
global one $\sigma:\mathbb{P}^{1}\dashrightarrow Y$ with a unique pole at $%
\lambda\in U_{w_{1}}\cap U_{w_{0}}$, of order $d-1$. Writing $%
(z-\lambda)^{d-1}p\left(z\right)=\alpha_{\lambda}\left(z\right)+z^{d}\beta_{%
\lambda}\left(z\right)$ where $\alpha_{\lambda}\left(z\right)\in\mathbb{C}%
\left[z\right]$ is a non zero polynomial of degree $\deg\alpha_{\lambda}\leq
d-1$, we have necessarily $s\left(z\right)=-\beta_{\lambda}\left(z\right)$,
which forces in turn $s\left(\lambda\right)=\lambda^{-d}\alpha_{\lambda}%
\left(\lambda\right)$. Letting $p\left(z\right)=a_{1}z+\cdots+a_{d-1}z^{d-1}$%
, a direct computation shows that 
\begin{equation*}
s\left(\lambda\right)=\sum_{i=0}^{d-2}\left(-1\right)^{i+1}\tbinom{d-2}{i}%
a_{i+1}\lambda^{i}.
\end{equation*}
Since $r\left(z\right)=\sum_{i=0}^{d-2}\left(-1\right)^{i+1}\tbinom{d-2}{i}%
a_{i+1}z^{i}$ is a nonzero polynomial, it follows that for every $%
\lambda\in(U_{w_{1}}\cap U_{w_{0}})\setminus V\left(r\left(z\right)\right)$
there exists a (unique) rational section $\sigma:\mathbb{P}%
^{1}\dashrightarrow Y$ with pole of order $d-1$ at $\lambda$.
\end{proof}

\subsection{A topological characterization of the affine $2$-sphere}

\indent\newline
\noindent The fact that a smooth affine surface $X$ diffeomorphic to $%
\mathbb{S}_{\mathbb{C}}^{2}=\left\{ z_{1}^{2}+z_{2}^{2}+z_{3}^{2}=1\right\}
\subset \mathbb{A}_{\mathbb{C}}^{3}$ is algebraically isomorphic to $\mathbb{%
S}_{\mathbb{C}}^{2}$ is probably folklore. We provide a proof because of the
lack of an appropriate reference.

\begin{parn}
Let $\mathbb{F}_{0}=\mathbb{P}^{1}\times \mathbb{P}^{1}$ with bi-homogeneous
coordinates $(\left[ u_{0}:u_{1}\right] ,\left[ v_{0}:v_{1}\right] )$. Via
the open immersion 
\begin{equation*}
j:\mathbb{S}_{\mathbb{C}}^{2}\rightarrow \mathbb{F}_{0},\;\left(
z_{1},z_{2},z_{3}\right) \mapsto \left( \left[ z_{1}+iz_{2}:z_{3}+1\right] ,%
\left[ z_{1}+iz_{2}:1-z_{3}\right] \right) 
\end{equation*}%
we may identify $\mathbb{S}_{\mathbb{C}}^{2}$ with the complement in $%
\mathbb{F}_{0}$ of the diagonal $\Delta =\left\{
u_{0}v_{1}+u_{1}v_{0}=0\right\} $. The restriction to $\mathbb{S}_{\mathbb{C}%
}^{2}$ of the first projection $\mathbb{F}_{0}\rightarrow \mathbb{P}^{1}$ is
a locally trivial $\mathbb{A}^{1}$-bundle, whence a trivial $\mathbb{R}^{2}$%
-bundle in the euclidean topology. Thus 
\begin{equation*}
H_{i}\left( \mathbb{S}_{\mathbb{C}}^{2},\mathbb{Z}\right) =%
\begin{cases}
\mathbb{Z} & \text{if }i=0,2 \\ 
0 & \text{otherwise}.%
\end{cases}%
\end{equation*}%
Furthermore, since $\Delta ^{2}=2$, the \emph{fundamental group at infinity} 
$\pi _{1}^{\infty }\left( \mathbb{S}_{\mathbb{C}}^{2}\right) $ of $\mathbb{S}%
_{\mathbb{C}}^{2}$ (see e.g. \cite{Gurjar1984,Ramanujam1971}) is isomorphic
to $\mathbb{Z}_{2}$. It turns out that these topological invariants provide
a characterization of $\mathbb{S}_{\mathbb{C}}^{2}$ among all smooth affine
surfaces, namely:
\end{parn}

\begin{thm}
\label{thm:NoExotic2-sphere} A smooth affine surface $X$ with the homology
type and the homotopy type at infinity of $\mathbb{S}_{\mathbb{C}}^{2}$ is
isomorphic to $\mathbb{S}_{\mathbb{C}}^{2}$.
\end{thm}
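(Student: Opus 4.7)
The plan is to produce a smooth projective completion $(\bar{X},D)$ of $X$ in which $D=\bar{X}\setminus X$ is a single smooth rational curve of self-intersection $2$, and then to identify $(\bar{X},D)$ with the pair $(\mathbb{F}_{0},\Delta)$ used above in the description of $\mathbb{S}_{\mathbb{C}}^{2}$. I first fix any smooth projective completion of $X$ with SNC boundary and apply SNC-minimalization, repeatedly blowing down $(-1)$-curves in $D$ that meet the rest of $D$ in at most one point. The hypothesis that $\pi_{1}^{\infty}(X)=\mathbb{Z}/2$ is finite implies that the link at infinity has finite first homology, which in turn forces the dual graph of $D$ to be a tree and every component to be a smooth rational curve.

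Next I would pin down $\bar{X}$ using the homology of $X$. From the long exact sequence of cohomology of the pair $(\bar{X},D)$ combined with $b_{0}(X)=b_{2}(X)=1$ and $b_{1}(X)=0$, one reads off $b_{1}(\bar{X})=0$ and $b_{2}(\bar{X})=r+1$, where $r$ is the number of irreducible components of $D$. Together with $h^{1,1}(\bar{X})\geq 1$ coming from any K\"ahler class, the Hodge decomposition of $H^{2}(\bar{X},\mathbb{C})$ forces $p_{g}(\bar{X})=0$, so Castelnuovo's criterion yields that $\bar{X}$ is a rational surface. The plumbing presentation of the fundamental group of the link of an SNC tree of rational curves then shows that, after minimalization, the only configuration compatible with $\pi_{1}^{\infty}(X)=\mathbb{Z}/2$ is a single vertex of weight $\pm 2$; since $X$ is affine, $D$ must support an ample divisor and so $D^{2}=2$.

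With $r=1$, $b_{2}(\bar{X})=2$, and $\bar{X}$ rational, the classification of rational surfaces gives $\bar{X}\simeq\mathbb{F}_{n}$ for some $n\geq 0$. A smooth rational curve $D\subset\mathbb{F}_{n}$ with $D^{2}=2$ has class $C_{0}+(n/2+1)\ell$ in $\mathrm{Pic}(\mathbb{F}_{n})=\mathbb{Z}C_{0}\oplus\mathbb{Z}\ell$, so $n$ is even, and for $n\geq 2$ the negative section $C_{0}$ satisfies $C_{0}\cdot D=0$; the inclusion $C_{0}\subset X$ would then provide a complete curve inside the affine variety $X$, a contradiction. Therefore $n=0$ and $D$ is a smooth curve of bidegree $(1,1)$; since $\mathrm{Aut}(\mathbb{F}_{0})$ acts transitively on smooth $(1,1)$-curves, $(\bar{X},D)\cong(\mathbb{F}_{0},\Delta)$ and $X\cong\mathbb{S}_{\mathbb{C}}^{2}$.

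The main obstacle I expect is the plumbing step: extracting from the single datum $\pi_{1}^{\infty}(X)=\mathbb{Z}/2$ the clean conclusion that an SNC-minimal boundary reduces to one smooth rational curve of self-intersection $2$ requires a case analysis of minimal weighted trees whose intersection form has determinant $\pm 2$ and whose associated plumbing fundamental group is cyclic of order two. In particular one must rule out chains with mixed positive and negative weights surviving SNC-minimality; the standard continued-fraction normalization of weighted trees, together with the affineness of $X$ (which forbids proper components in $X$), should handle this.
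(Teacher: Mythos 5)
Your overall strategy (complete $X$, use the topology at infinity to force the boundary down to a single smooth rational $(+2)$-curve, identify the pair with $(\mathbb{F}_0,\Delta)$ via transitivity of $\mathrm{Aut}(\mathbb{F}_0)$ on $(1,1)$-curves) is the same as the paper's, and your endgame on $\mathbb{F}_n$ is fine. But there are two genuine gaps. The first is the rationality of $\bar X$: you deduce it from $q=p_g=0$ ``by Castelnuovo's criterion,'' but Castelnuovo's criterion requires $q=P_2=0$; surfaces with $q=p_g=0$ need not be rational (Enriques, Godeaux, Campedelli, \dots), so this step is simply wrong as stated. (Your derivation of $p_g=0$ is also too weak: $h^{1,1}\geq 1$ from a K\"ahler class only gives $2p_g\leq r$; you need the $r$ boundary classes to be independent in $H^{1,1}$.) The paper gets around this exactly where you would expect: finiteness of $\pi_1^\infty(X)$ forces $\bar\kappa(X)=-\infty$ by Gurjar--Miyanishi, hence $X$ is affine ruled and $V$ is birationally ruled over a curve $C$; then $q=g(C)=0$ gives rationality (and, as a bonus, the fibered structure already exhibits $D$ as a section plus trees in fibers). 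Without some such input your argument does not rule out non-rational completions.

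The second gap is the one you flagged, and it is worse than you suggest: SNC-minimalization (blowing down $(-1)$-curves in the boundary) does \emph{not} reduce $D$ to a single vertex of weight $2$. The configuration that actually arises here is the chain $[0,0,-2]$, which contains no $(-1)$-curves and is therefore already SNC-minimal, yet has three components; its plumbing link is $\mathbb{RP}^3$ with $\pi_1=\mathbb{Z}/2$ and intersection form of determinant $\pm2$, so it is not excluded by your determinant/plumbing case analysis. To reach a single $(+2)$-curve one must perform an \emph{inner} elementary transformation -- blow up the node $D_0\cap D_1$ and then contract the proper transforms of $D_0$, $D_1$, $D_2$ -- i.e., use the standard-form theory of zigzags, not just blow-downs. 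You also need to know that the minimal boundary is a \emph{chain} rather than a branched tree; the paper gets this from Gurjar--Shastri's analysis of $\pi_1^\infty$ for trees of rational curves, and your proposed determinant-$\pm2$ classification would have to reprove it. So the ``main obstacle'' you identify is a real missing argument, not a routine verification.
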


\begin{proof}
The finiteness of $\pi _{1}^{\infty }\left( X\right) $ implies that $X$ has 
logarithmic Kodaira dimension $\bar{\kappa}(X)=-\infty $ \cite{Gurjar1988}
whence that $X$ is affine ruled. It follows that $X$ admits a completion into a smooth, projective, 
birationally ruled surface $p:V\rightarrow C$, where $C$ is a smooth projective curve. 
One can further assume that the boundary $D:=V\setminus X$ is a connected divisor
with simple normal crossings that can be written as $D=B\cup $\ $G_{0}\cup G_{1}\cup \cdots \cup G_{s},$ where $B$
is a section of $p$ and the $G_{i}$ are disjoint trees of smooth rational
curves contained in the fibers of $p$ \cite[I.2]{Miyanishi1981}. 
In particular, the dual graph of $D$ is a tree. 

The hypotheses imply that $H_{i}\left( X,\mathbb{Z}\right) =0$ for $i=1,3,4$
and so $H^{i}\left( X,\mathbb{Z}\right) =0$ for $i=1,3,4$, whereas $%
H^{2}\left( X,\mathbb{Z}\right) \simeq H_{2}\left( X,\mathbb{Z}\right)
\simeq \mathbb{Z}$ by the universal coefficient Theorem. By Poincar\'e
-Lefschetz duality, we have $H^{i}\left( \left( V,D\right) ,\mathbb{Z}%
\right) \simeq H_{4-i}\left( X,\mathbb{Z}\right) $ and $H_{i}\left( \left(
V,D\right) ,\mathbb{Z}\right) \simeq H^{4-i}\left( X,\mathbb{Z}\right) $,
and so, these groups are zero for $i=0,1$ and $3$, and isomorphic to $%
\mathbb{Z}$ for $i=2,4$. From the long exact sequences of (co)homology of
pairs 
\begin{eqnarray*}
& \cdots \rightarrow H_{\ast }\left( D\right) \stackrel{\partial _{\ast }}{\rightarrow}  
H_{\ast }\left( V\right) \rightarrow H_{\ast }\left( V,D\right) 
\rightarrow  H_{\ast -1}\left( D\right) \rightarrow  \cdots &   \\
& \cdots  \rightarrow  H^{\ast -1}\left( D\right)  \stackrel{\partial^{\ast }}{
\rightarrow }  H^{\ast }\left( V,D\right)  \rightarrow  H^{\ast }\left( V\right)
 \rightarrow  H^{\ast }\left( D\right)  \rightarrow  \cdots & 
\end{eqnarray*}
we get $H_{3}\left( D,\mathbb{Z}\right) \simeq H_{3}\left( V,\mathbb{Z}%
\right) \simeq 0$ and so $H^{1}\left( V,\mathbb{Z}\right) =0$ by Poincar\'e
duality. Similarly, $H^{3}\left( D,\mathbb{Z}\right) \simeq H^{3}\left( V,%
\mathbb{Z}\right) =0$ and $H_{1}\left( V,\mathbb{Z}\right)=0$. It follows that 
$H^{1}\left( D,\mathbb{Z}\right)$ and $H_{1}\left( D,\mathbb{Z}\right)$ are either 
simultaneously $0$ or isomorphic to $\mathbb{Z}$. In the latter case $D$ would contain a cycle of
rational curves, which is impossible from the above description of $D$. Thus 
$H_{1}\left( D,\mathbb{Z}\right) =0$ and so $D$ is a tree of nonsingular rational curves. This
implies in turn that $H^{1}\left( V,\mathcal{O}_{V}\right) =\left\{
0\right\} $ for otherwise $D$ would be contained in a fiber of the Albanese
morphism $q:V\rightarrow {\rm Alb}(V)$, in contradiction with the fact that $D$ is the support
of an ample divisor as $X$ is affine. 
Since $\pi _{1}^{\infty }\left( X\right) \simeq \pi
_{1}\left( \mathbb{S}_{\mathbb{C}}^{2}\right) \simeq \mathbb{Z}_{2}$ by
hypothesis, Theorem 1 in \cite{Gurjar1984} and its proof imply that $D$ is a
chain. Therefore, up to replacing $V$ by another minimal completion of $X$
obtained from $V$ by a sequence of blow-ups and blow-downs with centers
outside $X$, we may assume that $D=D_{0}\cup D_{1}\cup \cdots \cup D_{s}$,
where $D_{i}\cdot D_{j}=1$ if $\left\vert i-j\right\vert =1$ and $0$
otherwise, $D_{0}^{2}=D_{1}^{2}=0$ and $D_{i}^{2}\leq -2$ for every $%
i=2,\ldots ,s$ (see e.g. \cite[Lemma 2.7 and 2.9]{Dubouloz2004}). With this
description, one checks easily that $\pi _{1}^{\infty }\left( X\right)
\simeq \mathbb{Z}_{2}$ if and only if $s=2$ and $D_{2}^{2}=-2$. By
blowing-up the point $D_{0}\cap D_{1}$ and contracting the proper transforms
of $D_{0}$, $D_{1}$ and $D_{2}$, we reach a completion $V_{0}$ of $X$ by a
smooth rational curve $B$ with self-intersection $2$. It follows from
Danilov-Gizatullin's classification \cite{Gizatullin1977} that $V_{0}\simeq 
\mathbb{F}_{0}$ and that $B$ is of type $\left( 1,1\right) $. Since the
automorphism group of $\mathbb{F}_{0}$ acts transitively on the set of
smooth curves of type $\left( 1,1\right) $, we finally obtain that $X\simeq 
\mathbb{F}_{0}\setminus \Delta \simeq \mathbb{S}_{\mathbb{C}}^{2}$.

\end{proof}

\bibliographystyle{amsplain}

\providecommand{\bysame}{\leavevmode\hbox to3em{\hrulefill}\thinspace}
\providecommand{\MR}{\relax\ifhmode\unskip\space\fi MR }
\providecommand{\MRhref}[2]{%
  \href{http://www.ams.org/mathscinet-getitem?mr=#1}{#2}
}
\providecommand{\href}[2]{#2}
\begin{thebibliography}{}

\end{thebibliography}


\begin{thebibliography}{10}

\bibitem{ArzhKuyumZai2010}
I.V. Arzhantsev, K.~Kuyumzhiyan, and M.~Zaidenberg, \emph{Flag varieties, toric
  varieties, and suspensions: three instances of infinite transitivity},
  preprint arXiv:1003.3164 (2010).

\bibitem{BandmanML2005}
T.~Bandman and L.~Makar-Limanov, \emph{Non-stability of {AK}-invariant},
  Michigan Math. J. \textbf{53} (2005), no.~2, 263--281.

\bibitem{Dubouloz2004}
A.~Dubouloz, \emph{Completions of normal affine surfaces with a trivial
  makar-limanov invariant}, Michigan Mathematical Journal \textbf{52} (2004),
  no.~2, 289--308.

\bibitem{DuboulozTG2009}
A.~Dubouloz, \emph{The cylinder over the {K}oras-{R}ussell cubic threefold has a
  trivial {M}akar-{L}imanov invariant}, Transformation Groups \textbf{14}
  (2009), no.~3, 531--539.

\bibitem{DubFinMet2009}
A.~Dubouloz, D.~Finston, and P.~D. Metha, \emph{Factorial threefolds with
  $\mathbb{G}_a$-actions}, preprint arXiv:0902.3873v1 (2009).

\bibitem{Dubouloz2011}
A.~Dubouloz, L.~Moser-Jauslin, and P.M. Poloni, \emph{Non cancellation for
  smooth contractible affine threefolds}, To appear in Proc. of the AMS (2011),
  preprint arXiv 1004.4723.

\bibitem{Finston2008}
D.R. Finston and S.~Maubach, \emph{The automorphism group of certain factorial
  threefolds and a cancellation problem}, Israel J. Math. \textbf{163} (2008),
  369--381.

\bibitem{Flenner2009}
H.~Flenner, S.~Kaliman, and M.~Zaidenberg, \emph{On the {D}anilov-{G}izatullin
  isomorphism theorem}, L'Enseignement Mathématique \textbf{55} (2009), no.~2,
  1--9.

\bibitem{Giraud1971}
J.~Giraud, \emph{Cohomologie non abelienne}, Springer {V}erlag, 1971.

\bibitem{Gizatullin1977}
M.~H. Gizatullin and V.~I. Danilov, \emph{Automorphisms of affine surfaces
  {II}}, Izv. Akad. Nauk SSSR Ser. Mat. \textbf{41} (1977), no.~1, 54--103.

\bibitem{Grauert1979}
H.~Grauert and R.~Remmert, \emph{Theory of {S}tein spaces}, Springer Verlarg,
  1979.

\bibitem{Grothendieck1966}
A.~Grothendieck, \emph{On the de {R}ham cohomology of algebraic varieties},
  Publ. Math. IHES \textbf{29} (1966), 95--103.

\bibitem{Gurjar1988}
R.~V. Gurjar and M.~Miyanishi, \emph{Affine surfaces with $\bar{\kappa}\leq
  1$}, Algebraic {G}eometry and {C}ommutative {A}lgebra, Vol. {I} (M.~Maruyama
  H. Matsumura M. Miyanishi T.~Oda H.~Hijikata, H.~Hironaka and K.~Ueno, eds.),
  Kinokuniya, Tokyo, 1988, pp.~99--124.

\bibitem{Gurjar1984}
R.V. Gurjar and A.R. Shastri, \emph{The fundamental goup at infinity of affine
  surfaces}, Comment. Math. Helv. \textbf{59} (1984), 459--484.

\bibitem{Hartshorne1977}
R.~Hartshorne, \emph{Algebraic geometry}, Grad. {T}exts in {M}ath., vol.~52,
  Springer, New {Y}ork, 1977.

\bibitem{Jung1942}
H.~Jung, \emph{\"{U}ber ganze birationale {T}ransformationen der {E}bene}, J.
  Reine Angew. Math. \textbf{184} (1942), 161--174.

\bibitem{Kaliman1999}
S.~Kaliman and M.~Zaidenberg, \emph{Affine modifications and affine varieties
  with a very transitive automorphism group}, Transformation Groups \textbf{4}
  (1999), 53--95.

\bibitem{Lindel1981/82}
H.~Lindel, \emph{On the {B}ass-{Q}uillen conjecture concerning projective
  modules over polynomial rings}, Invent. Math. \textbf{65} (1981/82),
  319--­323.

\bibitem{Makar-Limanov1996}
L.~Makar-Limanov, \emph{On the hypersurface $x + x^2 y + z^2 + t^3 = 0$ in
  $\mathbb{C}^4$ or a $\mathbb{C}^3$-like threefold which is not
  $\mathbb{C}^3$}, Israel J. Math. \textbf{96} (1996), 419--429.

\bibitem{Miyanishi1981}
M.~Miyanishi, \emph{Noncomplete algebraic surfaces}, Lecture Notes in
  Mathematics, vol. 857, Springer-Verlag, Berlin-New York, 1981.

\bibitem{Miyanishi1988}
M.~Miyanishi, \emph{Algebraic characterizations of the affine 3-space}, Proceedings
  of the Algebraic Geometry Seminar (Singapore), 1988, pp.~53--67.

\bibitem{Mumford1994}
D.~Mumford, J.~Fogarty, and F.~Kirwan, \emph{Geometric {I}nvariant {T}heory},
  Ergeb. Math, vol.~34, Springer-Verlag, Berlin, 1994.

\bibitem{Murthy2000}
M.P.~Murthy, \emph{Cancellation problem for projective modules over certain
  affine algebras}, Proceedings of the International Colloquium on Algebra,
  Arithmetic and Geometry Mumbai, Narosa Publishing House, 2000, pp.~493--507.

\bibitem{Ramanujam1971}
C.P. Ramanujam, \emph{A topological characterization of the affine plane as an
  algebraic variety}, Ann. of Maths. \textbf{94} (1971), 69--88.

\bibitem{Zaidenberg1999}
M.~Zaidenberg, \emph{Exotic algebraic structures on affine spaces}, Algebra i
  Analiz \textbf{11} (1999), no.~5, 3--73.

\end{thebibliography}

\end{document}